\newcommand*\patchAmsMathEnvironmentForLineno[1]{%
  \expandafter\let\csname old#1\expandafter\endcsname\csname #1\endcsname
  \expandafter\let\csname oldend#1\expandafter\endcsname\csname end#1\endcsname
  \renewenvironment{#1}%
     {\linenomath\csname old#1\endcsname}%
     {\csname oldend#1\endcsname\endlinenomath}}%
\newcommand*\patchBothAmsMathEnvironmentsForLineno[1]{%
  \patchAmsMathEnvironmentForLineno{#1}%
  \patchAmsMathEnvironmentForLineno{#1*}}%
\newcounter{anc}
\newcommand{\C}{\refstepcounter{anc}{C_{\theanc}}}
\newcommand{\Cl}[1]{{\C\label{#1}}}
\newcommand{\Cr}[1]{{C_{\tiny \ref{#1}}}}
\theoremstyle{plain} 
\newtheorem{thm}{Theorem}[section]
\newtheorem{prop}[thm]{Proposition}
\newtheorem{cor}[thm]{Corollary}
\newtheorem{lem}[thm]{Lemma}
\theoremstyle{definition}
\theoremstyle{remark}
\newcommand{\N}{\mathbb{N}}
\newcommand{\R}{\mathbb{R}}
\newcommand{\Z}{\mathbb{Z}}
\newcommand{\Q}{\mathbb{Q}}
\renewcommand{\P}{\mathbb{P}}
\newcommand{\E}{\mathbb{E}}
\newcommand{\1}[1]{\mathbf{1}_{#1}}
\renewcommand{\hat}{\widehat}
\renewcommand{\bar}{\overline}
\newcommand{\hyphen}{\textrm{-}}
\newcommand{\as}{\textrm{a.s.}}
\newcommand{\Id}{\mathrm{Id}}
\newcounter{at}
\newcommand{\A}{\refstepcounter{at}{\alpha_{\theat}}}
\newcommand{\Al}[1]{{\A\label{#1}}}
\newcommand{\Ar}[1]{{\alpha_{\tiny \ref{#1}}}}
\begin{document}
\title[The first passage time in the frog model]
	{Deviation bounds for the first passage time\\ in the frog model}
\author[N.~KUBOTA]{Naoki KUBOTA}
\address[N. Kubota]
	{College of Science and Technology, Nihon University, Chiba 274-8501, Japan.}
\email{kubota.naoki08@nihon-u.ac.jp}
\thanks{The author was supported by JSPS Grant-in-Aid for Young Scientists (B) 16K17620.}
\keywords{Frog model, Egg model, simple random walk, random environment, large deviation inequalities, concentration inequalities}
\subjclass[2010]{60K35, 82B43}

\begin{abstract}
We consider the so-called frog model with random initial configurations.
The dynamics of this model is described as follows: Some particles are randomly assigned on any site of the multidimensional cubic lattice.
Initially, only particles at the origin are active and these independently perform simple random walks.
The other particles are sleeping and do not move at first.
When sleeping particles are hit by an active particle,
they become active and start moving in a similar fashion.
The aim of this paper is to derive large deviation and concentration bounds
for the first passage time at which an active particle reaches a target site.
\end{abstract}

\maketitle

\section{Introduction}

\subsection{The model}
For $d \geq 2$, we write $\Z^d$ for the $d$-dimensional cubic lattice.
Let $\omega=(\omega(x))_{x \in \Z^d}$ be independent random variables with a common law on $\N_0:=\N \cup \{0\}$,
not concentrated in zero.
Furthermore, independently of $\omega$, let $(S_k(x,\ell))_{k=0}^\infty$, $x \in \Z^d$, $\ell \in \N$,
be independent simple random walks on $\Z^d$ with $S_0(x,\ell)=x$.
We now introduce the \emph{first passage time} $T(x,y)$ from $x$ to $y$ as follows:
\begin{align*}
	T(x,y):=\inf\Biggl\{ \sum_{i=0}^{m-1}\tau(x_i,x_{i+1}):
	\begin{minipage}{4truecm}
		$m \geq 1$,\\
		$x=x_0,x_1,\dots,x_n=y$
	\end{minipage}
	\Biggr\},
\end{align*}
where
\begin{align*}
	\tau(x_i,x_{i+1}):=\inf\{ k \geq 0:S_k(x_i,\ell)=x_{i+1} \text{ for some } 1 \leq \ell \leq \omega(x_i) \}
\end{align*}
with the convention that $\tau(x_i,x_{i+1}):=\infty$ if $\omega(x_i)=0$.
The fundamental object of study is the first passage time $T(0,x)$ conditioned on the event $\{ \omega(0) \geq 1 \}$.
Its intuitive meaning is as follows:
We now regard simple random walks as ``frogs'' and $\omega$ stands for an initial configuration of frogs,
i.e., $\omega(y)$ frogs sit on each site $y$ (there is no frog at $y$ if $\omega(y)=0$).
Suppose that the origin $0$ is occupied by at least one frog.
They are active and independently perform simple random walks,
but the other frogs are sleeping and do not move at first.
When sleeping frogs are attacked by an active one, they become active and start doing independent simple random walks.
Then, $T(0,x)$ describes the first passage time at which an active frog reaches a site $x$.

It is straightforward to check that the first passage time has subadditivity:
\begin{align*}
	T(x,z) \leq T(x,y)+T(y,z),\qquad x,y,z \in \Z^d.
\end{align*}
In addition, Alves et al.~\cite[Lemmata~2.2 and 2.3]{AlvMacPopRav01} proved that
there exist constants $0<\Cl{Alv1},\Cl{Alv2}<\infty$ and $0<\Al{Alv3}<1$ such that
for all $x \in \Z^d$ and $t \geq \|x\|_1^4$,
\begin{align}\label{eq:Alv_tail}
	P(T(0,x) \geq t|\omega(0) \geq 1) \leq \Cr{Alv1} e^{-\Cr{Alv2}t^\Ar{Alv3}},
\end{align}
where $\|\cdot\|_1$ is the $\ell^1$-norm on $\R^d$.
This guarantees the integrability of $T(0,x)$.
As we are now working with the i.i.d.~set-up, an application of the subadditive ergodic theorem
enables us to show the following proposition (see \cite[Theorem~1.1 and Steps~1--6 in Section~2]{AlvMacPopRav01} for more details).

\begin{prop}\label{prop:Alv_shape}
There exists a norm $\mu(\cdot)$ (which is called the time constant) on $\R^d$ such that
almost surely on the event $\{ \omega(0) \geq 1 \}$,
\begin{align*}
	\lim_{\|x\|_1 \to \infty} \frac{T(0,x)-\mu(x)}{\|x\|_1}=0.
\end{align*}
Furthermore, $\mu(\cdot)$ is invariant under permutations of the coordinates and under reflections in the coordinate hyperplanes,
and satisfies
\begin{align}\label{eq:tc_bound}
	\|x\|_1 \leq \mu(x) \leq \mu(\xi_1)\|x\|_1,\qquad x \in \R^d,
\end{align}
where $\xi_1$ is the first coordinate vector of $\R^d$.
\end{prop}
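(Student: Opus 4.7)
The plan is to follow the classical Cox--Durrett recipe for shape theorems: first produce a deterministic directional time constant via Kingman's subadditive ergodic theorem, then extend it to a norm on $\R^d$ with the stated symmetries and bounds, and finally upgrade the directional convergence to the uniform statement using the stretched-exponential tail bound \eqref{eq:Alv_tail}.

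For each fixed $x \in \Z^d$ I would analyse $X_{m,n} := T(mx,nx)$ with $0 \leq m \leq n$: the subadditivity $X_{0,n} \leq X_{0,m} + X_{m,n}$ comes from the definition of $T$, and the $\Z^d$-translation invariance of the joint law of $(\omega, S_\cdot(\cdot,\cdot))$ supplies the required stationarity and ergodicity under integer shifts by $x$. The subtle point is that $T(0,nx) = \infty$ on $\{\omega(0) = 0\}$; this is handled by working under the conditional measure $\P(\cdot \mid \omega(0)\geq 1)$ (of positive probability) and, if necessary, replacing the endpoints by the nearest sites carrying at least one frog, whose expected passage times are bounded linearly in $n$ by \eqref{eq:Alv_tail}. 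Kingman's theorem then yields a deterministic constant $\mu(x) := \lim_n n^{-1} T(0,nx)$ almost surely on $\{\omega(0)\geq 1\}$.

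I would next extend $\mu$ to a norm on $\R^d$. Setting $\mu(q^{-1}x) := q^{-1}\mu(x)$ for $x\in\Z^d$ and $q\in\N$, subadditivity transfers from $T$ to $\mu$ on $\Q^d$; combined with the forthcoming linear upper bound it yields a Lipschitz estimate $|\mu(y) - \mu(z)| \leq \mu(\xi_1)\|y-z\|_1$ extending $\mu$ continuously to $\R^d$. The coordinate-permutation and reflection symmetries of $\mu$ transfer from the joint law of the frog configuration and the walks. For the lower bound in \eqref{eq:tc_bound}, any simple random walk needs at least $\|u-v\|_1$ steps to move from $u$ to $v$, so each admissible path in the definition of $T$ forces $T(0,nx) \geq n\|x\|_1$, whence $\mu(x) \geq \|x\|_1$. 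For the upper bound, decomposing $x = \sum_i x_i \xi_i$, subadditivity and the symmetries give $\mu(x) \leq \sum_i |x_i|\mu(\xi_i) = \mu(\xi_1)\|x\|_1$.

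The main obstacle is promoting the directional limit to the uniform statement $\lim_{\|x\|_1\to\infty}(T(0,x)-\mu(x))/\|x\|_1 = 0$. The strategy is to cover the $\ell^1$-unit sphere by finitely many rational directions $v_1,\dots,v_K$ up to $\ell^1$-distance $\epsilon$, apply the directional convergence along each ray $\R_+ v_j$, and control the cost of moving from a ray to a generic lattice endpoint via subadditivity of $T$ together with the Lipschitz continuity of $\mu$. The residual fluctuations of $T(0,x)$ about $\mu(x)$ are absorbed by the stretched-exponential tail \eqref{eq:Alv_tail}: it makes $\sum_{\|x\|_1\geq R}\P(|T(0,x)-\mu(x)| > \epsilon\|x\|_1 \mid \omega(0)\geq 1)$ summable for $R$ large, and a Borel--Cantelli argument then rules out exceptional large-norm sites almost surely on $\{\omega(0)\geq 1\}$, yielding the desired uniform convergence.
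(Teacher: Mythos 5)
The paper does not actually prove this proposition; it quotes it from Alves, Machado, Popov and Ravishankar, and your skeleton (Kingman along rays, extension of $\mu$ to a norm via subadditivity and lattice symmetries, the step-counting lower bound $T(0,y)\geq\|y\|_1$, and a Cox--Durrett style uniformization) is the same as in that reference. The first three stages are essentially sound, modulo the standard caveat that Kingman cannot be applied to $X_{m,n}=T(mx,nx)$ under the conditional law (conditioning on $\omega(0)\geq 1$ destroys stationarity of the shifts), so one must work with the endpoint-modified times $T^*(ix,jx)$ under the unconditional law --- the device you mention parenthetically and which the paper formalizes as \eqref{eq:mft} and Lemma~\ref{lem:modify} --- and then transfer the limit back to $T$ along the subsequence $kx\in\mathcal{I}$. (Also, \eqref{eq:Alv_tail} gives expected passage times of order $\|x\|_1^4$, not linear; that is still enough for the integrability Kingman needs, and linearity of $\mu$ then comes from subadditivity, not from the tail bound.)

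The genuine gap is in your final step. The bound \eqref{eq:Alv_tail} is valid only for $t\geq\|x\|_1^4$ and controls only the upper tail, so it says nothing about $\P(|T(0,x)-\mu(x)|>\epsilon\|x\|_1)$: the relevant threshold $\mu(x)+\epsilon\|x\|_1=O(\|x\|_1)$ lies far below $\|x\|_1^4$, and the lower deviation is untouched by an upper-tail estimate in any case. A summable per-site deviation bound at the linear scale is essentially the content of Theorems~\ref{thm:right} and~\ref{thm:left} --- the main results of the paper, one of which even requires the extra hypothesis $E[\omega(0)]<\infty$ --- so invoking it here is both unavailable and circular. The same issue infects your ``cost of moving from a ray to a generic endpoint'': for $\|x-z\|_1\leq\epsilon\|x\|_1$, \eqref{eq:Alv_tail} only yields $T(z,x)=O((\epsilon\|x\|_1)^4)$ with high probability, which is not $O(\epsilon\|x\|_1)$. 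The correct uniformization (Steps 1--6 of Alves et al., mirrored in this paper by the renormalization of Section~\ref{sect:pre} leading to Proposition~\ref{prop:APtype}) first upgrades \eqref{eq:Alv_tail} to a tail bound valid for all $t\geq C\|x\|_1$ by chaining through a percolation of good boxes; only with such a linear-scale a priori estimate can the covering-plus-Borel--Cantelli argument you describe be run.
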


\subsection{Main results}
Our main results are the following upper large deviation bounds for the first passage time.
Throughout this paper, we write $\P:=P(\cdot|\omega(0) \geq 1)$ to shorten notation.

\begin{thm}\label{thm:right}
There exists a constant $0<\Al{right1}<1$ such that for all $\epsilon>0$,
\begin{align*}
	\limsup_{\|x\|_1 \to \infty} \frac{1}{\|x\|_1^\Ar{right1}}
	\log \P(T(0,x) \geq (1+\epsilon)\mu(x))<0.
\end{align*}
\end{thm}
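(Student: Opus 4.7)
The plan is to exploit subadditivity together with independence of the frog dynamics on disjoint regions, reducing the problem to the concentration of a sum of independent random variables with stretched exponential tails.

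Fix $\epsilon>0$ and a parameter $\beta\in(0,1)$ to be tuned. Set $n:=\lfloor\|x\|_1^\beta\rfloor$ and, approximating a straight segment from $0$ to $x$, choose lattice points $0=y_0,y_1,\ldots,y_N=x$ with $N\asymp\|x\|_1/n$ and $\|y_{i+1}-y_i\|_1\asymp n$. For each $i$ introduce a \emph{restricted} first passage time $T_i^{\ast}$ defined exactly like $T(y_i,y_{i+1})$ but only allowed to use frogs initially placed inside a cube $B_i$ of side $\gamma n$ centered on the $i$-th segment, and only before these frogs leave $B_i$. If $\gamma$ is small enough, the boxes $B_i$ are pairwise disjoint, so the family $\{T_i^{\ast}\}$ is mutually independent. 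Since clearly $T(y_i,y_{i+1})\le T_i^{\ast}$, subadditivity yields
\begin{align*}
T(0,x)\ \le\ \sum_{i=0}^{N-1} T_i^{\ast}.
\end{align*}

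By translation invariance the $T_i^{\ast}$ are identically distributed. Redoing the analysis of \cite{AlvMacPopRav01} that leads to \eqref{eq:Alv_tail}, but inside the box $B_i$, should yield a uniform stretched exponential tail $\P(T_i^{\ast}\ge t\mid\omega(y_i)\ge 1)\le Ce^{-ct^{\alpha''}}$ for some $\alpha''\in(0,1)$. Combined with Proposition~\ref{prop:Alv_shape} and a choice of $\beta$ ensuring $n\to\infty$, this gives $\E[T_i^{\ast}\mid\omega(y_i)\ge 1]\le\mu(y_{i+1}-y_i)+o(n)$, and hence $\sum_i\E[T_i^{\ast}\mid\cdot]\le\mu(x)+o(\|x\|_1)$. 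On the high-probability event that $\omega(y_i)\ge 1$ for all $i$, $\{T(0,x)\ge(1+\epsilon)\mu(x)\}$ then forces the sum of the centered $T_i^{\ast}$ to exceed $c\epsilon\|x\|_1$, and a Bernstein-type inequality for sums of $N$ independent stretched-exponential random variables (obtained, for instance, by truncation at a scale just above the mean plus a union bound on the discarded large values) produces a bound of the form $\exp(-c'\|x\|_1^{\alpha'})$ with $\alpha'\in(0,1)$. The small probability that $\omega(y_i)=0$ at some $i$ is absorbed into an analogous stretched-exponential error, using \eqref{eq:Alv_tail} at scale $n$ to slightly shift the starting point of each piece.

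The main obstacle is the quantitative control of $\E[T_i^{\ast}]$. Proposition~\ref{prop:Alv_shape} only provides asymptotic convergence of $T/\|x\|_1$ to $\mu$ with no a priori rate, and since $T_i^{\ast}\ge T(y_i,y_{i+1})$ its mean is not automatically close to $n\mu(\xi_1)$. The scale $n=\|x\|_1^\beta$ must therefore be chosen large enough that the shape theorem does give a useful approximation at scale $n$, yet small enough that the concentration in the $N$-fold sum beats whatever rate of convergence one invokes; balancing these two competing requirements is precisely what forces the final exponent $\alpha_{\ref{right1}}$ to lie strictly inside $(0,1)$ rather than reaching the naive value $1$.
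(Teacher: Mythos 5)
There are genuine gaps here, and they are exactly the obstacles that force the paper onto a different route (a fixed-scale renormalization with a percolation argument, following \cite{GarMar07}, rather than a straight-line decomposition into independent pieces at a growing scale $n=\|x\|_1^{\beta}$).

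First, the event that $\omega(y_i)\geq 1$ for every $i$ is not a high-probability event: since $P(\omega(0)=0)>0$, its probability is $(1-P(\omega(0)=0))^{N}$ with $N\asymp\|x\|_1^{1-\beta}$, i.e.\ it decays exponentially, so with probability tending to one some $y_i$ carries no frog and $T(y_i,y_{i+1})=\infty$. Your proposed repair (shifting each $y_i$ to a nearby occupied site) is essentially the modified passage time \eqref{eq:mft}, but once the endpoints are random the boxes $B_i$ and the chaining $T(0,x)\le\sum_i T_i^{\ast}$ no longer fit together cleanly, and the independence you rely on is compromised. Second, the restricted quantity $T_i^{\ast}$ (frogs born in $B_i$, used only before they exit $B_i$) is infinite with positive probability — all frogs initially in $B_i$ may leave $B_i$ before the activation chain reaches $y_{i+1}$ — so $\E[T_i^{\ast}]=\infty$ and the claimed bound $\E[T_i^{\ast}]\le\mu(y_{i+1}-y_i)+o(n)$ cannot hold as stated; even after truncation, proving that the restriction and the truncation cost only $o(n)$ in expectation amounts to a quantitative shape theorem for the restricted process, which is precisely the missing ingredient you flag as ``the main obstacle'' without resolving it. Proposition~\ref{prop:Alv_shape} and Lemma~\ref{lem:modify} give convergence of $\E[T^{*}(0,kz)]/k$ with no rate, so there is no way to certify that the scale $n=\|x\|_1^{\beta}$ is ``large enough'' uniformly in $x$.

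The paper's proof avoids both problems by working at a \emph{fixed} scale $N$ depending only on $\epsilon$: a site $v$ is declared good if the modified times $T^{*}$ between neighbouring macroscopic sites are at most $(1+\delta)$ times their limiting value, which by Lemma~\ref{lem:modify} happens with probability arbitrarily close to one for $N$ large — no rate of convergence is needed. Good sites form a finitely dependent field dominating supercritical Bernoulli percolation (Proposition~\ref{prop:domi}), and the chemical-distance estimates (Propositions~\ref{prop:GM} and~\ref{prop:chemical}) supply, with exponentially small failure probability, a path of good boxes from near $0$ to near $x$ of nearly minimal length; bad boxes and empty sites are simply routed around rather than estimated in expectation. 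The endpoints are controlled by Proposition~\ref{prop:APtype}, whose stretched-exponential tail is what produces the exponent $\alpha<1$ in the theorem. If you want to salvage your scheme, you would need to replace the straight line of boxes by such a percolation of good boxes; at that point you have reproduced the paper's argument.
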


\begin{thm}\label{thm:left}
If $E[\omega(0)]<\infty$, then there exists a constant $0<\Al{left1}<1$ such that for all $\epsilon>0$,
\begin{align*}
	\limsup_{\|x\|_1 \to \infty} \frac{1}{\|x\|_1^\Ar{left1}}
	\log \P(T(0,x) \leq (1-\epsilon)\mu(x))<0.
\end{align*}
\end{thm}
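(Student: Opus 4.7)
The plan is to combine a lower bound on $E[T(0,x)]$ with a one-sided concentration estimate for $T(0,x)$ around its mean.

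First I would show that $E[T(0,x)]\geq (1-\epsilon/2)\mu(x)$ for all sufficiently large $\|x\|_1$. The shape theorem (Proposition~\ref{prop:Alv_shape}) yields $(T(0,x)-\mu(x))/\|x\|_1\to 0$ almost surely, while subadditivity together with Fekete's lemma applied along integer multiples of $x$ forces $E[T(0,nx)]\geq n\mu(x)$ for every $n\geq 1$. The tail estimate \eqref{eq:Alv_tail} provides enough uniform integrability to upgrade the almost sure convergence to $L^1$, giving $E[T(0,x)]/\mu(x)\to 1$, and hence the desired lower bound on $E[T(0,x)]$. It then suffices to prove the one-sided concentration
\[
\P\bigl(E[T(0,x)] - T(0,x) \geq \tfrac{\epsilon}{2}\mu(x)\bigr) \leq \exp(-c\|x\|_1^{\Ar{left1}}).
\]

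For the concentration I would adopt a localization--truncation--concentration scheme. Using Theorem~\ref{thm:right} together with the a priori upper bound $\mu(x)\leq\mu(\xi_1)\|x\|_1$ from \eqref{eq:tc_bound}, with probability at least $1-\exp(-c\|x\|_1^{\eta})$ the first passage time is attained by an activation chain contained in a box $B_x$ of side of order $\|x\|_1$, using each simple random walk only up to a time $t_{\max}$ of order $\|x\|_1$. Next, using $E[\omega(0)]<\infty$ together with Markov's inequality, truncate the initial counts at a polynomial level $M=M(\|x\|_1)$: the event $\{\omega(y)\leq M\text{ for every }y\in B_x\}$ has probability at least $1-|B_x|\,E[\omega(0)]/M$, which tends to $1$ polynomially fast. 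On the intersection of these two events, $T(0,x)$ agrees with a functional $\tilde T$ of finitely many bounded i.i.d.~inputs, namely the truncated counts $\omega_M(y)$ for $y\in B_x$ and the first $t_{\max}$ steps of each random walk starting in $B_x$, to which a bounded-differences (McDiarmid/Azuma) or Talagrand-type concentration inequality can be applied.

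The hard part will be this final concentration step: controlling the effect of altering a single bounded input on $\tilde T$. A priori, changing one truncated count $\omega_M(y)$ or one random-walk step could shift the optimal activation chain across the whole box $B_x$, and a crude estimate would give only a vacuous sum-of-squares bound. The key is to argue that such perturbations propagate only locally along an optimal chain, in a spirit analogous to (but more delicate than) the analysis used for Theorem~\ref{thm:right}. This is also where the assumption $E[\omega(0)]<\infty$ is essential: without a moment bound on $\omega$, the truncation step above would fail, as a single atypically large $\omega(y)$ could by itself open up a macroscopically faster route from $0$ to $x$ and thereby destroy the lower tail bound.
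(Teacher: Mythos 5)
Your overall architecture (a lower bound on the mean plus a one-sided concentration estimate) matches the paper's strategy in spirit, and you correctly identify the influence-of-one-coordinate bound as the crux. But there are three genuine gaps. First, the Fekete step fails for the raw passage time under $\P=P(\cdot\mid\omega(0)\geq 1)$: the process $T(ix,jx)$ is not integrable (indeed $T(ix,jx)=\infty$ whenever $\omega(ix)=0$), so $E_\P[T(0,nx)]$ is not a subadditive sequence and you cannot conclude $E[T(0,nx)]\geq n\mu(x)$ this way. This is precisely why the paper introduces the modified passage time $T^*(0,x)=T(0^*,x^*)$ in \eqref{eq:mft}, proves $\mu(x)=\inf_k E[T^*(0,kx)]/k$ in Lemma~\ref{lem:modify}, and then must separately compare $T(0,x)$ with $T^*(0,x)$ on $\{0\in\mathcal{I}\}$ (via the bound \eqref{eq:bypath}, which itself rests on Corollary~\ref{cor:jump}). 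Second, your truncation of the initial configuration at a polynomial level $M$ produces a bad event of probability only $O(\|x\|_1^d/M)$, i.e.\ polynomially small; since the theorem asserts a stretched-exponential bound $\exp(-c\|x\|_1^{\Ar{left1}})$, a polynomially small additive error is fatal, and no polynomial choice of $M$ repairs this without exponential moments on $\omega(0)$, which are not assumed. The paper avoids truncating $\omega$ altogether: it truncates the two-point times $\tau(x,y)$ via the function $\sigma_t$ in the proof of Theorem~\ref{thm:tscon}, whose tails are stretched-exponential by Proposition~\ref{prop:APtype} irrespective of the tail of $\omega$; the moment hypothesis $E[\omega(0)]<\infty$ enters only through the union bound over the $\omega(0)$ walks in Corollary~\ref{cor:jump}.

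Third, and most importantly, the step you flag as ``the hard part'' is indeed where essentially all of the work lies, and your proposal does not supply it. A bounded-differences inequality over the $\mathrm{poly}(\|x\|_1)$ truncated inputs gives a variance proxy growing like the number of inputs, hence nothing at deviation scale $\|x\|_1$ in dimension $d\geq 2$, as you note. The paper's resolution is not a refinement of McDiarmid but a different mechanism: it groups the environment into boxes of side $t$, shows via the capping in $\sigma_t$ that resampling one box changes $T_t(0,x)$ by at most $8Kt$ (Lemma~\ref{lem:perturb}) and that the optimal chain meets at most $O(1\vee\|x\|_\infty/t)$ boxes (Lemma~\ref{lem:hit}), and then feeds the resulting self-bounding estimates into the exponential Efron--Stein inequalities \eqref{eq:BLMv}--\eqref{eq:BLMw}. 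This yields concentration at scale $t\sqrt{\|x\|_1}$, which is then transferred back from $T_t$ to $T^*$ by Proposition~\ref{prop:tsdif} and applied with $t\asymp\epsilon\sqrt{\|x\|_1}$. Without an argument of this kind, your ``perturbations propagate only locally'' claim is an assertion of the theorem's main difficulty rather than a proof of it.
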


Our key tool to prove the above theorems is the modified first passage time defined as follows.
Denote by $\mathcal{I}$ the random set of all sites of $\Z^d$ which frogs initially occupy, i.e.,
\begin{align*}
	\mathcal{I}:=\{ x \in \Z^d:\omega(x) \geq 1 \}.
\end{align*}
For any $x \in \Z^d$, let $x^*$ be the closest point to $x$ in $\mathcal{I}$ for the $\ell^1$-norm,
with a deterministic rule to break ties.
Then, the modified first passage time $T^*(x,y)$ is given by
\begin{align}\label{eq:mft}
	T^*(x,y):=T(x^*,y^*).
\end{align}
By definition, the subadditivity is inherited from the original first passage time:
\begin{align*}
	T^*(x,z) \leq T^*(x,y)+T^*(y,z),\qquad x,y,z \in \Z^d.
\end{align*}
A particular difference between $T(x,y)$ and $T^*(x,y)$ is that $T(x,y)$ is inevitably equal to infinity if $\omega(x)=0$,
but $T^*(x,y)$ can avoid that situation.
Moreover, we can derive the following concentration inequality for $T^*(0,x)$.

\begin{thm}\label{thm:tscon}
Assume $E[\omega(0)]<\infty$.
For all $\gamma>0$, there exist constants $0<\Cl{scon1},\Cl{scon2},\Cl{scon3}<\infty$ and $0<\Al{scon4}<1$ such that
for all $\Cr{scon1}(1+\log\|x\|_1)^{1/\Ar{scon4}} \leq t \leq \gamma \sqrt{\|x\|_1}$,
\begin{align*}
	P\Bigl( |T^*(0,x)-E[T^*(0,x)]| \geq t\sqrt{\|x\|_1} \Bigr)
	\leq \Cr{scon2}e^{-\Cr{scon3}t^\Ar{scon4}}.
\end{align*}
\end{thm}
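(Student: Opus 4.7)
The plan is to adapt Kesten's martingale-moment method for first-passage percolation to the frog model. For each $y \in \Z^d$ write $U_y := (\omega(y), (S_k(y,\ell))_{k \geq 0, \ell \geq 1})$ for the bundle of iid random variables associated with $y$, so that $T^*(0,x)$ is a measurable function of $(U_y)_{y \in \Z^d}$. The target stretched-exponential concentration at the scale $\sqrt{\|x\|_1}$ suggests that Kesten-type increment and variance bounds should do the job, with the two cutoffs on $t$ coming from (a) a truncation error and (b) the usual upper limit of such martingale arguments.

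First I would localize: using the tail bound \eqref{eq:Alv_tail}, Proposition~\ref{prop:Alv_shape}, and $E[\omega(0)]<\infty$ to control $\|x-x^*\|_1$, the value $T^*(0,x)$ is with overwhelming probability determined by those $U_y$ with $y$ in a box $B$ of side $O(\|x\|_1)$ around $\{0,x\}$ and by random walks of length at most some polynomial $\|x\|_1^c$. Replace $T^*(0,x)$ by such a truncated quantity $\tilde T$; the truncation error has probability at most $\exp(-\|x\|_1^{\alpha})$ for some $\alpha>0$, which is absorbed into the lower threshold $\Cr{scon1}(1+\log\|x\|_1)^{1/\Ar{scon4}}$ imposed on $t$.

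Next, enumerate the relevant sites as $y_1,\dots,y_N$ and form Doob's martingale $M_j := E[\tilde T \mid U_{y_1},\dots,U_{y_j}]$ with differences $\Delta_j := M_j - M_{j-1}$. By a resampling coupling (replace $U_{y_j}$ by an independent copy), $|\Delta_j|$ is bounded by the cost of ``rerouting'' the frog dynamics in a neighborhood of $y_j$, and this cost inherits a stretched-exponential tail from \eqref{eq:Alv_tail} applied to local passage times. Crucially, $\Delta_j$ is effectively supported on the event that $y_j$ is visited by a near-optimal sequence of activations; since such a sequence has length $O(\|x\|_1)$, one expects a Kesten-style bound of the form $\sum_j \|\Delta_j\mathbf{1}_{\{y_j\text{ relevant}\}}\|_p^2 = O(\|x\|_1)$ for each moment $p$, which is what converts $N = O(\|x\|_1^d)$ summands into a variance proxy of order $\|x\|_1$ and yields the scale $\sqrt{\|x\|_1}$ in the statement.

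Finally I would feed these increment and sum estimates into a moment-based martingale concentration inequality for stretched-exponential increments (in the spirit of Kesten's Proposition~8.2 or a Boucheron--Lugosi--Massart version), optimising over $p$ to obtain $P(|\tilde T - E[\tilde T]| \geq t\sqrt{\|x\|_1}) \leq \Cr{scon2}\exp(-\Cr{scon3}t^{\Ar{scon4}})$ in the stated window of $t$; the transfer from $\tilde T$ to $T^*(0,x)$ is harmless because $E[T^*]-E[\tilde T] = o(\sqrt{\|x\|_1})$ in the regime of interest. The main obstacle is the third step: bounding $|\Delta_j|$ by a truly local repair cost with good tails. Unlike classical first-passage percolation, modifying $U_{y_j}$ alters both the frogs stationed at $y_j$ and the full trajectories they will perform once activated, so the rerouting argument must exhibit a coupling of two frog systems and verify that a detour through frogs living in a neighborhood of $y_j$ is available at comparable cost, controlled uniformly by the universal tail estimate \eqref{eq:Alv_tail}.
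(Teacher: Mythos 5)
You have the right general framework (a resample-and-compare concentration argument together with a control on the number of ``relevant'' resampling units), but the step you yourself flag as the main obstacle --- bounding the repair cost $|\Delta_j|$ after resampling --- is exactly where the proof needs a new idea, and your sketch does not supply it. The localization you perform (restricting to a box of side $O(\|x\|_1)$ and to polynomially long walks) bounds the \emph{number} of variables but not the \emph{size} of an increment: resampling the randomness at $y_j$ changes $\tau(y_j,z)$ simultaneously for every $z$, and the natural detour requires an occupied site near $y_j$ plus a fresh hitting time for the frog there, both random; the best available tail for such a repair cost is the stretched exponential \eqref{eq:Alv_tail}, which has no exponential moments, so neither Azuma nor the exponential Efron--Stein inequalities apply directly. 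A purely moment-based Kesten/Burkholder route would in addition require showing that the increment is supported on geodesic-relevant sites, which in the frog model is delicate because resampling a site can \emph{create} new long-range shortcuts, not merely destroy old ones.

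The paper's resolution is to truncate the two-point function itself rather than the configuration: $\tau(x,y)$ is replaced by $\sigma_t(x,y)$, capped deterministically at $4K(t\vee\|x-y\|_\infty)$, and one works with the associated passage time $T_t$. Two things then happen. First, resampling all the randomness in a box of side $t$ decreases $T_t(0,x)$ by at most $8Kt$ \emph{deterministically}, and only if the optimal $\sigma_t$-sequence meets that box (Lemma~\ref{lem:perturb}); since at most $O(1\vee\|x\|_\infty/t)$ boxes are met (Lemma~\ref{lem:hit}), the exponential Efron--Stein inequalities yield $P(|T_t(0,x)-E[T_t(0,x)]|\geq t\sqrt{\|x\|_1})\leq 2e^{-ct}$ (Proposition~\ref{prop:ttcon}). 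Note that the resampling unit is a box of side $t$, not a single site: this is what balances the increment $8Kt$ against the count $\|x\|_1/t$ to produce a variance proxy of order $t\|x\|_1$ and hence the scale $t\sqrt{\|x\|_1}$. Second, one must show $T_t(0^*,x^*)=T^*(0,x)$ outside an event of probability at most $C\|x\|_1^{4d}e^{-c t^{\alpha}}$ (Proposition~\ref{prop:tsdif}, which rests on Proposition~\ref{prop:APtype} and Corollary~\ref{cor:jump}); it is this error term, rather than your localization error, that forces the lower restriction $t\geq \Cr{scon1}(1+\log\|x\|_1)^{1/\Ar{scon4}}$. Without an analogue of the $\sigma_t$ truncation, or some other device making the resampling increments genuinely bounded, your martingale cannot be closed.
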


Theorem~\ref{thm:tscon} is not only of independent interest in view of the investigation of the modified first passage time,
but also plays a key role to obtain Theorem~\ref{thm:left} as mentioned in Subsection~\ref{subsec:org} below.

We finally discuss, briefly, lower large deviation bounds for the first passage time.
Let us first observe the deviation to the right of $T(0,x)$ from $\mu(x)$.
Consider the event $A$ that $\omega(\xi_1)=0$ and $S_\cdot(0,\ell)$, $1 \leq \ell \leq \omega(0)$, stay
inside the set $\{ 0,\xi_1 \}$ until time $\lceil (1+\epsilon)\mu(\xi_1) \rceil \|x\|_1$.
By \eqref{eq:tc_bound}, we have $T(0,x) \geq (1+\epsilon)\mu(x)$ on the event $A \cap \{ 0 \in \mathcal{I} \}$.
It follows that
\begin{align*}
	\P(A)
	&\geq P(\xi_1 \not\in \mathcal{I}) P(0 \in \mathcal{I})^{-1}
		\sum_{L=1}^\infty P(\omega(0)=L) (2d)^{-L\lceil (1+\epsilon)\mu(\xi_1)\rceil \|x\|_1}\\
	&= P(0 \not\in \mathcal{I})
		\E \Bigl[ (2d)^{-\omega(0)\lceil (1+\epsilon)\mu(\xi_1)\rceil \|x\|_1} \Bigr].
\end{align*}
Jensen's inequality proves
\begin{align*}
	\liminf_{\|x\|_1 \to \infty} \frac{1}{\|x\|_1}\log \P(T(0,x) \geq (1+\epsilon)\mu(x))
	\geq -\E[\omega(0)] \lceil (1+\epsilon)\mu(\xi_1)\rceil \log(2d).
\end{align*}
If $E[\omega(0)]=\infty$, then this lower bound has no meaning,
otherwise this suggests that the optimal speed of the right tail large deviation is between $\|x\|_1^\Ar{right1}$ and $\|x\|_1$.

We next treat the deviation to the left  of $T(0,x)$ from $\mu(x)$.
In the case where $\inf_{\|y\|_1=1}\mu(y)<(1-\epsilon)^{-1}$, there is a direction $y \in \Q^d$
such that $\|y\|_1=1$ and $\mu(y)<(1-\epsilon)^{-1}$.
One has $T(0,Ny) \geq N$ for all $N \in \N$ with $Ny \in \Z^d$, and hence
\begin{align*}
	\P(T(0,Ny) \leq (1-\epsilon)\mu(Ny))=0.
\end{align*}
In particular,
\begin{align*}
	\liminf_{\|x\|_1 \to \infty} \frac{1}{\|x\|_1} \log \P(T(0,x) \leq (1-\epsilon)\mu(x))=-\infty.
\end{align*}
On the other hand, in the case where $\inf_{\|y\|_1=1}\mu(y) \geq (1-\epsilon)^{-1}$,
we have $(1-\epsilon)\mu(x) \geq \|x\|_1$ for all $x \in \Z^d$.
Fix a self-avoiding nearest-neighbor path $(0=v_0,v_1,\dots,v_n=x)$ with minimal length $n=\|x\|_1$
and let $A'$ be the event that $S_k(0,1)=v_k$ for all $0 \leq k \leq n$.
It holds that $\P(A')=(2d)^{-\|x\|_1}$ and $T(0,x) \leq (1-\epsilon)\mu(x)$ on the event $A' \cap \{ 0 \in \mathcal{I} \}$.
Hence,
\begin{align*}
	\liminf_{\|x\|_1 \to \infty} \frac{1}{\|x\|_1}\log \P(T(0,x) \leq (1-\epsilon)\mu(x)) \geq -\log(2d),
\end{align*}
which tells us that in this case, the optimal speed of the left tail large deviation is between $\|x\|_1^\Ar{left1}$ and $\|x\|_1$.

Optimizing the speeds for the above large deviations may be difficult in general.
The first passage time depends on the propagation of active frogs, and the following consideration suggests that
this propagation is strongly related to the dimension $d$ and the law of the initial configuration $\omega$.
The range of the simple random walk grows sublinearly in $d=2$ but linearly in $d \geq 3$
over time (see \cite[pages~333, 338]{Hug95_book}).
This means that sleeping frogs are likely to awaken in $d \geq 3$ as compared with the situation in $d=2$.
Apart from the dimension $d$, the outbreak of active frogs may occur if each site of $\Z^d$ has plenty of frogs with high probability.
However, for example, that situation is unusual in the case where the initial configuration of frogs
obeys a Bernoulli distribution with very small parameters.
In any case, we do not have enough information to determine the optimal speeds for the right and left tail large deviations,
and would like to address these problems in future research.

\subsection{Earlier literature}
The frog model was originally introduced by Ravishankar, and its idea comes from the following information spreading.
Consider that every active frog has some information.
When it hits sleeping frogs, the information is shared between them.
Active frogs move freely and play a role in spreading the information.

The first published result on the frog model is due to Telcs--Wormald~\cite[Section~2.4]{TelWor99}
(In their paper, the frog model was called the ``egg model'').
They treated the frog model on $\Z^d$ with one-frog-per-site initial configuration,
and proved that it is recurrent for all $d \geq 1$, i.e., almost surely, active frogs infinitely often visit the origin.
(Otherwise, we say that the frog model is transient.)
This result proposed an interesting relationship between the strength of transience for a single random walk
and the superior numbers of frogs.

To observe this more precisely, Popov~\cite{Pop01} considered the frog model with Bernoulli initial configurations
and exhibited phase transitions of its transience and recurrence.
After that, Alves et al.~coped with that kind of problem for the frog model with random initial configuration and random lifetime,
see \cite{AlvMacPop02b,Pop03} for more details.
In particular, \cite{Pop03} is a nice survey on the frog model and presents several open problems.
It has also been a great help to recent progress on recurrence and transience for the frog model.
We refer the reader to \cite{DobPfe14,GanSch09,GhoNorRoi17,HofWei16,KosZer17} for the frog model on lattices,
\cite{DobGanHofPopWei17_arXiv,DobPfe14,GanSch09} for the frog model with drift on lattices,
and \cite{HofJohJun16,HofJohJun17_arXiv,HofJohJun17} for the frog model on trees.

On the other hand, there are few results for the first passage time and the time constant of the frog model
except for \cite{AlvMacPop02,AlvMacPopRav01,JohJun16_arXiv}.
(Recently, the first passage time is also studied in a Euclidean setting, see \cite{BecDinDurHuoJun17_arXiv}.)
However, in view of information spreading, it is important to investigate these quantities more precisely,
and Theorems~\ref{thm:right}, \ref{thm:left} and \ref{thm:tscon} above present non-trivial deviation bounds for the first passage time.

\subsection{Organization of the paper}\label{subsec:org}
Let us now describe how the present article is organized.
In Section~\ref{sect:pre}, for convenience, we summarize some notation and results for supercritical site percolation on $\Z^d$
and provide an upper tail estimate for the first passage time (see Proposition~\ref{prop:APtype} below).
In particular, as a consequence of Proposition~\ref{prop:APtype}, we obtain that
with high probability, each frog realizing $T(0,x)$ must find the next one
within the $\ell^1$-ball of radius much smaller than $\|x\|_1$ (see Corollary~\ref{cor:jump} below).
The estimates stated in Section~\ref{sect:pre} play a key role to prove Theorems~\ref{thm:right}, \ref{thm:left} and \ref{thm:tscon}.

The goal of Section~\ref{sect:right} is to prove Theorem~\ref{thm:right}.
We basically follow the strategy taken in \cite[Subsection~3.3]{GarMar07}.
Note that Proposition~\ref{prop:Alv_shape} suggests that if $N$ is large enough, then for each site $y \in \Z^d$,
it happens with high probability that $T(Ny,N(y+\xi)) \approx N\mu(\xi_1)$ for all $\xi \in \Z^d$ with $\|\xi\|_1=1$.
(However, $T(Ny,N(y+\xi))=\infty$ holds if $\omega(Ny)=0$.
To avoid this, we need to use the modified first passage time given by \eqref{eq:mft}.)
Such a site $y$ is called ``good'', and good sites induce a finitely dependent site percolation on $\Z^d$
with parameter sufficiently close to one (see Lemma~\ref{lem:good} below).
For simplicity, suppose that $x=n\xi_1$ and an arbitrary integer $n$ is much larger than $N$.
Results in Subsection~\ref{subsect:perc} below guarantee that the failure probability of the following event decays exponentially in $n$:
There exist good sites $y_1,\dots,y_Q$ such that
\begin{itemize}
	\item $Q \approx n/N$ and $\|y_q-y_{q+1}\|_1=1$ for all $1 \leq q \leq Q-1$,
	\item $\|Ny_1\|_1$ and $\|n\xi_1-Ny_Q\|_1$ are much smaller than $n$.
\end{itemize}
On this event,
\begin{align*}
	T(0,n\xi_1)
	&\leq T(0,Ny_1)+\sum_{q=1}^{Q-1}T(Ny_q,Ny_{q+1})+T(Ny_Q,x)\\
	&\approx T(0,Ny_1)+\mu(n\xi_1)+T(Ny_Q,x).
\end{align*}
We use the upper tail estimate (stated in Proposition~\ref{prop:APtype}) to control the first and third terms of the most right side,
and get the desired bound in the case $x=n\xi_1$.
A few additional works are needed to carry out the above argument uniformly in any direction $x$.

In Section~\ref{sect:right}, we begin with the proof of Theorem~\ref{thm:left}.
The left large deviation bound has been studied for the first passage time in the first passage percolation
and the chemical distance in the Bernoulli percolation, see \cite{Ahl15}, \cite{GarMar10} and \cite{Kes86}.
These are similar quantities to the first passage time in the frog model,
but the approaches taken in \cite{Ahl15}, \cite{GarMar10} and \cite{Kes86} do not work well in our setting.
The main difficulty is here that the first passage time in the frog model is regarded as
a long-range version of the first passage percolation on $\Z^d$
and depends on both simple random walks and random initial configurations.
This difficulty disturbs the use of a renormalization procedure and a BK-like inequality, which are key tools in the aforementioned articles.
To overcome this problem, we use the concentration inequality for $T^*(0,x)$ as follows.
Divide $T(0,x)-\mu(x)$ into three terms:
\begin{align*}
	&T(0,x)-\mu(x)\\
	&= \{ T(0,x)-T^*(0,x)\}+\{ T^*(0,x)-E[T^*(0,x)] \}+\{ E[T^*(0,x)]-\mu(x) \}. 
\end{align*}
From Lemma~\ref{lem:modify} below, $E[T^*(0,x)] \geq \mu(x)$ holds and the third term is harmless for the left tail.
The second term can be controlled once we get the concentration inequality for $T^*(0,x)$, which is Theorem~\ref{thm:tscon}.
Hence, in the proof of Theorem~\ref{thm:left}, we try to compare $T(0,x)$ and $T^*(0,x)$ on the event $\{ \omega(0) \geq 1 \}$
by using Corollary~\ref{cor:jump}.

The remainder of Section~\ref{sect:right} will be devoted to the proof of Theorem~\ref{thm:tscon}.
The proof is based on Chebyshev's inequality and exponential versions of the Efron--Stein inequality.
This approach has already been taken by Garet--Marchand~\cite[Section~3]{GarMar10}
to derive the concentration inequality for the chemical distance in the Bernoulli percolation.
However, their model is a nearest-neighbor case.
We cannot directly apply their method to our model and modify it in the proof of Theorem~\ref{thm:tscon}.

We close this section with some general notation.
Write $\|\cdot\|_1$ and $\|\cdot\|_\infty$ for the $\ell^1$ and $\ell^\infty$-norms on $\R^d$.
Denote by $\{ \xi_1,\dots,\xi_d\}$ the canonical basis of $\R^d$ and let $\mathcal{E}^d:=\{ \xi \in \Z^d:\|\xi\|_1=1 \}$.
For $i \in \{1,\infty\}$, $x \in \R^d$ and $r>0$, $B_i(x,r)$ is the $\ell^i$-ball in $\R^d$ of center $x$ and radius $r$, i.e.,
\begin{align*}
	B_i(x,r):=\{ y \in \R^d:\|y-x\|_i \leq r \}.
\end{align*}
Throughout this paper, we use $c$, $c'$, $C$, $C'$, $C_i$ and $\alpha_i$, $i=1,2,\dots$, to denote constants with
$0<c,c',C,C',C_i<\infty$ and $0<\alpha_i<1$, respectively.

\section{Preliminaries}\label{sect:pre}

\subsection{Supercritical site percolation}\label{subsect:perc}
Let $X=(X_v)_{v \in \Z^d}$ be a family of random variables taking values in $\{ 0,1 \}$.
This induces the random set $\{v \in \Z^d:X_v=1 \}$.
The \emph{chemical distance} $d_X(v_1,v_2)$ for $X$ between $v_1$ and $v_2$ is defined by
\begin{align*}
	d_X(v_1,v_2):=\inf \biggl\{ \#\pi:
	\begin{minipage}{7.5truecm}
		$\pi$ is a nearest-neigibor path from $v_1$ to $v_2$\\
		using only sites in $\{v \in \Z^d:X_v=1 \}$
	\end{minipage}
	\biggr\},
\end{align*}
where $\#\pi$ is the length of a path $\pi$.
A connected component of $\{v \in \Z^d:X_v=1 \}$ which contains infinitely many points
is called an \emph{infinite cluster} for $X$.
If there exists almost surely a unique infinite cluster for $X$, then we denote it by $\mathcal{C}_\infty(X)$.

For $0<p<1$, let $\eta_p=(\eta_p(v))_{v \in \Z^d}$ denote a family of independent random variables  satisfying
\begin{align*}
	P(\eta_p(v)=1)=1-P(\eta_p(v)=0)=p,\qquad v \in \Z^d.
\end{align*}
This is called the \emph{independent Bernoulli site percolation} on $\Z^d$ of the parameter $p$.
It is well known that there is $p_c=p_c(d) \in (0,1)$ such that if $p>p_c$ then
the infinite cluster $\mathcal{C}_\infty(\eta_p)$ exists (see Theorems~1.10 and 8.1 of \cite{Gri99_book} for instance).
The following proposition presents estimates for the size of the holes in the infinite cluster $\mathcal{C}_\infty(\eta_p)$
and the chemical distance $d_{\eta_p}(\cdot,\cdot)$ (see \cite[below (2.2) and Corollary~2.2]{GarMar10} for the proof).

\begin{prop}\label{prop:GM}
For $p>p_c$, the following results (1) and (2) hold:
\begin{enumerate}
	\item There exist constants $\Cl{GM4}$ and $\Cl{GM5}$ such that for all $t>0$,
		\begin{align*}
			P( \mathcal{C}_\infty(\eta_p) \cap B_1(0,t)=\emptyset) \leq \Cr{GM4}e^{-\Cr{GM5}t}.
		\end{align*}
	\item  There exist constants $\Cl{GM1}$, $\Cl{GM2}$ and $\Cl{GM3}$ such that
		for all $v \in \Z^d$ and $t \geq \Cr{GM1}\|v\|_1$,
		\begin{align*}
			P(t \leq d_{\eta_p}(0,v)<\infty) \leq \Cr{GM2}e^{-\Cr{GM3}t}.
		\end{align*}
\end{enumerate}
\end{prop}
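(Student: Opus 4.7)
The plan is to derive both parts from the standard block renormalization for supercritical Bernoulli site percolation on $\Z^d$, in the spirit of Grimmett--Marstrand, Pisztora and Antal--Pisztora. Fix $K \in \N$ large and partition $\Z^d$ into disjoint cubes $B_K(v):=Kv+\{0,\dots,K-1\}^d$, $v \in \Z^d$. Following Pisztora's static renormalization, one attaches to each $v$ a ``good block'' event $G_K(v)$ asserting that $B_K(v)$ together with its $\ast$-neighbours contains a unique crossing cluster for $\eta_p$ and that every open path inside this region of diameter $\geq K/3$ meets that crossing cluster. The three standard properties of this block event are: (i) the family $(G_K(v))_{v \in \Z^d}$ is $R$-dependent for some fixed $R$; (ii) $P(G_K(v)=1) \to 1$ as $K \to \infty$ whenever $p>p_c$; and (iii) the crossing clusters of two $\ast$-neighbouring good blocks are subsets of a common open component of $\eta_p$, which is necessarily $\mathcal{C}_\infty(\eta_p)$. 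By the Liggett--Schonmann--Stacey domination theorem, choosing $K$ sufficiently large ensures that $(G_K(v))$ stochastically dominates an i.i.d.\ Bernoulli site field of parameter arbitrarily close to $1$, hence is highly supercritical.

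For part (1), the renormalised field has a unique infinite cluster of density close to $1$. A standard Peierls/contour estimate in this highly supercritical regime shows that the probability that $B_1(0,t/K)$ contains no good block is at most $(1-q)^{c_d (t/K)^d}$, which is much smaller than the desired $\Cr{GM4}e^{-\Cr{GM5}t}$. Property (iii) then upgrades ``a good block in $B_1(0,t/K)$'' to ``a site of $\mathcal{C}_\infty(\eta_p)$ in $B_1(0,t)$'', since any good block directly contributes its crossing cluster to the infinite cluster of $\eta_p$. Adjusting $K$ into the constants delivers the bound in (1).

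For part (2), I combine the block construction with the Antal--Pisztora argument. On $\{d_{\eta_p}(0,v)<\infty\}$ both $0$ and $v$ lie in $\mathcal{C}_\infty(\eta_p)$, and by part (1) applied at scales $\log \|v\|_1$ one finds, outside an event of probability $\leq Ce^{-c\|v\|_1}$, good blocks at block-distance $O(1)$ from $0$ and from $v$ respectively. Using exponential decay of the truncated two-point function in the highly supercritical renormalised field, there exists, with probability at least $1-C e^{-c\|v\|_1}$, a $\ast$-connected chain of good blocks between them of length at most $\rho\|v\|_1/K$. Inside each good block the chemical distance between any two sites of the crossing cluster is deterministically bounded by $c'K$, so concatenating yields $d_{\eta_p}(0,v) \leq \rho'\|v\|_1$ on this favourable event. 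Taking $\Cr{GM1}$ larger than $\rho'$ forces $\{t \leq d_{\eta_p}(0,v)<\infty\}$ to be contained in the complement of this event as soon as $t \geq \Cr{GM1}\|v\|_1$, whence (2) follows.

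The main obstacle is supplying properties (i)--(iii): exhibiting a block event with uniformly bounded dependence range, probability tending to $1$, and the deterministic gluing statement that $\ast$-connected good blocks share an open cluster. This is precisely Pisztora's static renormalization, which ultimately rests on the Grimmett--Marstrand slab theorem asserting that supercritical percolation on $\Z^d$ restricts to supercritical percolation on thick enough slabs. Once this machinery is in place, both (1) and (2) reduce to classical Peierls- and subadditivity-type arguments on the renormalised lattice rather than on $\eta_p$ itself.
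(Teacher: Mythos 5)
The paper offers no proof of this proposition: it simply cites \cite[below (2.2) and Corollary~2.2]{GarMar10}, whose results in turn rest on the Antal--Pisztora comparison of the chemical distance with the $\ell^1$-norm. Your sketch reconstructs exactly the renormalization machinery underlying those references (Pisztora's block events, Liggett--Schonmann--Stacey domination, Peierls on the coarse-grained lattice, exponential decay of the truncated connectivity), so in substance you are proving the cited results rather than taking a different route; this is legitimate and essentially correct.

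One step in part (1) is stated too strongly and needs the standard repair. You claim that ``any good block directly contributes its crossing cluster to the infinite cluster of $\eta_p$.'' This is false for an isolated good block surrounded by bad ones: the gluing property (iii) only merges crossing clusters of $\ast$-neighbouring \emph{good} blocks, so a single good block in $B_1(0,t/K)$ does not guarantee a site of $\mathcal{C}_\infty(\eta_p)$ in $B_1(0,t)$. What you need is a good block in the ball that moreover belongs to the infinite cluster of the \emph{renormalized} field; the probability that no such block exists is still exponentially small in $t/K$, because for an i.i.d.\ (or dominated) field with parameter close to $1$ the hole-size estimate for its infinite cluster follows from a direct Peierls/blocking-surface argument. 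So the reduction is: hole estimate for $\mathcal{C}_\infty(\eta_p)$ at general $p>p_c$ $\Rightarrow$ hole estimate for a nearly-full Bernoulli field, the latter being elementary. A second, harmless, imprecision: inside a good block the chemical distance between two sites of the crossing cluster is bounded by $c'K^d$ (the volume), not $c'K$; since $K$ is fixed this only changes the constant $\Cr{GM1}$. With these two adjustments the argument is complete.
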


The proof of Theorem~\ref{thm:right} relies on the following proposition
obtained by Garet--Marchand~\cite[Theorem~1.4]{GarMar07}.
(Their argument works not only for bond percolation, but also for site percolation.)
This tells us that when $p$ is sufficiently close to one, the chemical distance looks like the $\ell^1$-norm.

\begin{prop}\label{prop:chemical}
For each $\gamma>0$, there exists $p'(\gamma) \in (p_c,1)$ such that for all $p>p'(\gamma)$,
\begin{align*}
	\limsup_{\|v\|_1 \to \infty} \frac{1}{\|v\|_1} \log P((1+\gamma)\|v\|_1 \leq d_{\eta_p}(0,v)<\infty)<0.
\end{align*}
\end{prop}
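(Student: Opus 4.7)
The plan is a coarse-graining argument: partition $\Z^d$ at a large scale $N$ into boxes $\Lambda_N(z):=Nz+[0,N)^d$ ($z\in\Z^d$) and build a renormalized Bernoulli site percolation on $\Z^d$ whose parameter can be made arbitrarily close to $1$ by choosing $p$ close to $1$. Writing $\tilde\Lambda_N(z):=\bigcup_{\|\xi\|_\infty\leq 1}\Lambda_N(z+\xi)$, I would declare $\Lambda_N(z)$ \emph{good} when two properties hold for the restriction of $\eta_p$ to $\tilde\Lambda_N(z)$: (i) any two opposite faces of $\tilde\Lambda_N(z)$ are linked by an open path and all such crossings belong to a common cluster $\mathcal{K}(z)$, and (ii) for every $v_1,v_2\in\mathcal{K}(z)$, the chemical distance inside $\tilde\Lambda_N(z)$ satisfies $d_{\eta_p}^{\tilde\Lambda_N(z)}(v_1,v_2)\leq (1+\gamma/4)\|v_1-v_2\|_1+\gamma N/4$. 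Property (i) is the classical Grimmett--Marstrand good-block event, whose failure probability tends to $0$ as $p\to 1$ by a Peierls-type estimate on closed $\ast$-clusters. The field of good-block indicators is $k(N)$-dependent and, by the Liggett--Schonmann--Stacey domination theorem, stochastically dominates an i.i.d.\ Bernoulli field $\hat\eta_{\hat p}$ on $\Z^d$ with $\hat p=\hat p(p,N)\to 1$ as $p\to 1$ (for each fixed $N$).

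Given a distant $v$ and assuming $0\leftrightarrow v$ in the original percolation, I would build an open path from $0$ to $v$ in three segments: a short entry path into a good block near $0$, a sequence of adjacent good blocks tracking an $\ell^1$-geodesic in the renormalized lattice, and a short exit path from a good block near $v$. Proposition~\ref{prop:GM}(1) applied to $\hat\eta_{\hat p}$ provides good blocks at renormalized distance $O(\log\|v\|_1)$ from $0$ and $v$, off an event of probability exponentially small in $\|v\|_1$. For the middle segment I use that the renormalized $\ell^1$-greedy path has length $\lceil\|v\|_1/N\rceil$ and, when $\hat p$ is sufficiently close to $1$, has to be detoured only around rare isolated bad blocks whose cumulative contribution to the path length is negligible compared to $\gamma\|v\|_1/N$, again off an exponentially small event controlled by a contour/BK-style bound. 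Property (ii) then converts each within-block traversal into an open path of length at most $(1+\gamma/4)N$ per good block, and summing over $\lceil\|v\|_1/N\rceil(1+\gamma/4)$ such blocks yields a total open path length below $(1+\gamma)\|v\|_1$ for $\|v\|_1$ large.

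The hard part will be verifying property (ii) of the definition of a good block without invoking Proposition~\ref{prop:chemical} itself, which would be circular. I would do this directly by a detour argument: an $\ell^1$-geodesic $\pi$ between $v_1$ and $v_2$ in $\tilde\Lambda_N(z)$ meets in expectation at most $(1-p)|\pi|$ closed sites, and the Peierls estimate $P(|\mathcal{C}_{\mathrm{closed}}(x)|\geq k)\leq e^{-ck}$, valid for $p$ close to $1$, combined with a union bound over the $(3N)^d$ sites of $\tilde\Lambda_N(z)$, ensures that with probability $1-N^{-\alpha}$ every closed cluster meeting $\pi$ has diameter $O(\log N)$. On this high-probability event each such cluster can be circumvented by a local detour of length $O(\log N)$, and a Bernstein-type concentration bound on the number of closed sites along $\pi$ limits the cumulative detour to the desired $\gamma|\pi|/4+\gamma N/4$. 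First choosing $N$ large and then $p$ close to $1$ drives all the relevant failure probabilities small enough for the three-segment construction to yield the stated exponential decay, with $p'(\gamma)$ being the corresponding threshold.
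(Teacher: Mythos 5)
First, a point of reference: the paper does not prove this proposition at all — it is quoted from Garet--Marchand \cite{GarMar07} (their Theorem~1.4), with only the remark that the bond-percolation argument transfers to site percolation. Your sketch is therefore an attempt at a from-scratch proof, and its overall renormalization strategy is of the same family as theirs. However, the sketch contains a genuine circularity, and it is not located where you say it is. The step you flag as ``the hard part'' (property (ii) inside a block) can in fact be made trivial with your order of limits: since $N$ is fixed before $p\to 1$, the event that \emph{every} site of $\tilde\Lambda_N(z)$ is open has probability $p^{(3N)^d}\to 1$, and on it the restricted chemical distance equals the $\ell^1$-distance, so no detour argument inside blocks is needed. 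The real content sits in your middle segment, where you assert that the renormalized $\ell^1$-path ``has to be detoured only around rare isolated bad blocks whose cumulative contribution to the path length is negligible.'' That assertion \emph{is} Proposition~\ref{prop:chemical} for the renormalized field $\hat\eta_{\hat p}$ (with $\gamma/4$ in place of $\gamma$), so as written the argument reduces the proposition to itself at the coarse scale. To break the circle you must prove a quantitative detour lemma directly: (a) every $*$-connected component $B$ of bad blocks met by the renormalized geodesic can be bypassed by a nearest-neighbor path of \emph{good} blocks of length $O(|B|)$ — in $d\geq 3$ this is not automatic, since the external boundary of a $*$-connected set need not be nearest-neighbor connected, and one needs both a boundary-connectivity lemma and a good-block definition ensuring that $*$-adjacent good blocks communicate; and (b) a Peierls/lattice-animal concentration estimate showing that the total size of the bad components meeting the path is at most $C(1-\hat p)\|v\|_1/N$ outside an event of probability $e^{-c\|v\|_1/N}$. ``Rare isolated bad blocks'' does not capture this, because bad blocks can form large $*$-clusters.

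Two smaller repairs are also needed. Applying Proposition~\ref{prop:GM}(1) to $\hat\eta_{\hat p}$ at radius $O(\log\|v\|_1)$ gives a failure probability that is only polynomial in $\|v\|_1$, not exponential; you must search for the entry and exit blocks in balls of radius $\epsilon\|v\|_1$ and absorb the resulting $O(\epsilon\|v\|_1)$ entry/exit cost into the margin $\gamma\|v\|_1$. And the conditioning $d_{\eta_p}(0,v)<\infty$ has to be used: on this event either the common cluster of $0$ and $v$ is finite with diameter at least $\|v\|_1$ (exponentially unlikely), or both points lie in $\mathcal{C}_\infty(\eta_p)$, after which Proposition~\ref{prop:GM}(2) controls the cost of joining them to the nearby good blocks. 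With items (a) and (b) supplied — which is essentially the work carried out in \cite{GarMar07} — the rest of your outline goes through.
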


We finally recall the concept of stochastic domination.
Let $X=(X_v)_{v \in \Z^d}$ and $Y=(Y_v)_{v \in \Z^d}$
be families of random variables taking values in $\{0,1\}$.
We say that $X$ \emph{stochastically dominates} $Y$ if
\begin{align*}
	E[f(X)] \geq E[f(Y)]
\end{align*}
for all bounded, increasing, measurable functions $f:\{0,1\}^{\Z^d} \to \R$.
Furthermore, a family $X=(X_v)_{v \in \Z^d}$ of random variables is said to be \emph{finitely dependent}
if there exists $L>0$ such that any two sub-families $(X_{v_1})_{v_1 \in \Lambda_1}$
and $(X_{v_2})_{v_2 \in \Lambda_2}$ are independent whenever $\Lambda_1,\Lambda_2 \subset \Z^d$
satisfy that $\|v_1-v_2\|_1>L$ for all $v_1 \in \Lambda_1$ and $v_2 \in \Lambda_2$.

The following stochastic comparison is useful to compare locally dependent fields with the independent Bernoulli site percolation.
For the proof, we refer the reader to \cite[Theorem~7.65]{Gri99_book} or \cite[Theorem~B26]{Lig99_book} for instance.

\begin{prop}\label{prop:domi}
Suppose that $X=(X_v)_{v \in \Z^d}$ is a finitely dependent family of random variables taking values in $\{ 0,1 \}$.
For a given $0<p<1$, $X$ stochastically dominates $\eta_p$
provided $\inf_{v \in \Z^d} P(X_v=1)$ is sufficiently close to one.
\end{prop}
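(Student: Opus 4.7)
The plan is to build an explicit coupling realising the domination, following the classical scheme of Liggett--Schonmann--Stacey. Fix any enumeration $v_1,v_2,\dots$ of $\Z^d$ and i.i.d.\ variables $U_1,U_2,\dots$ uniform on $[0,1]$. Set
\begin{align*}
  \eta_p(v_n):=\1{\{U_n \leq p\}},\qquad
  X_{v_n}:=\1{\{U_n \leq q_n\}},
\end{align*}
where $q_n$ is the conditional probability $P(X_{v_n}=1\mid X_{v_1},\dots,X_{v_{n-1}})$ reconstructed from the given joint law of $X$. As long as $q_n \geq p$ almost surely for every $n$, the coupling forces $\eta_p(v) \leq X_v$ pointwise, which is the stochastic domination sought.

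Hence the work lies entirely in the pointwise lower bound $q_n \geq p$. Since $X$ is finitely dependent with range $L$, conditioning in $q_n$ only involves those previously examined indices $i<n$ with $\|v_i-v_n\|_1 \leq L$; call this collection $I_n$ and observe $|I_n| \leq N:=|B_1(0,L) \cap \Z^d|-1$. The target is to produce $p^*=p^*(q,L,d)$, with $p^* \to 1$ as $q:=\inf_v P(X_v=1) \to 1$, such that
\begin{align*}
  P\bigl(X_{v_n}=1 \bigm| X_{v_i}=x_i,\, i \in I_n\bigr) \geq p^*
\end{align*}
uniformly in $n$ and in $(x_i)_{i \in I_n} \in \{0,1\}^{I_n}$. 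Once this is in hand, any $p \leq p^*$ is admissible in the coupling, and choosing $q$ close enough to one allows any prescribed $p<1$.

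The main obstacle is that the conditioning event $A:=\{X_{v_i}=x_i,\, i \in I_n\}$ can have exponentially small probability (for instance when many $x_i=0$), so that the naive estimate $P(X_{v_n}=1 \mid A) \geq 1 - P(X_{v_n}=0)/P(A)$ is useless. The Liggett--Schonmann--Stacey remedy is to order $\Z^d$ so that the members of $I_n$ always sit on one prescribed ``side'' of $v_n$ (e.g.\ via a lexicographic or half-space order), and then to prove the desired lower bound by a double induction on $n$ and on $|I_n|$. Each inductive step peels off a single conditioning coordinate, uses $P(X_v=0) \leq 1-q$ together with monotonicity in the conditioning, and incurs a multiplicative loss bounded by a function of $q$ that tends to $1$ as $q \to 1$. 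I would reproduce this bookkeeping argument directly from \cite[Theorem~7.65]{Gri99_book}; the proposition then follows by choosing $q$ large enough so that the cumulative loss keeps $p^*$ above any prescribed $p<1$.
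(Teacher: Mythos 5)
The paper offers no proof of this proposition: it is precisely the Liggett--Schonmann--Stacey domination theorem, quoted from \cite[Theorem~7.65]{Gri99_book} and \cite[Theorem~B26]{Lig99_book}. Your first reduction is standard and correct: the sequential coupling does yield $X \succeq \eta_p$ once the conditional probabilities $q_n$ are bounded below by $p$ for every admissible configuration of the previously explored sites. The gap is that the bound you then set out to prove --- a uniform lower bound $p^*(q,L,d)$ on $P(X_{v_n}=1\mid X_{v_i}=x_i,\,i\in I_n)$ over \emph{all} configurations $(x_i)$, with $p^*\to 1$ as $q\to 1$ --- is false for finitely dependent fields, and this is exactly what makes the theorem non-trivial. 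Take i.i.d.\ $Z_v\sim\mathrm{Ber}(\rho)$ on $\Z^d$ and set $X_v:=Z_vZ_{v+\xi_1}$. This family is finitely dependent with range $L=1$ and $P(X_v=1)=\rho^2\to1$, yet
\begin{align*}
  P(X_v=1\mid X_{v-\xi_1}=0)=\frac{\rho^2(1-\rho)}{1-\rho^2}=\frac{\rho^2}{1+\rho}\longrightarrow \frac{1}{2}
  \qquad(\rho\to1),
\end{align*}
and averaging over the remaining coordinates of $I_n$ shows that some full configuration makes the conditional probability at most about $1/2$. Hence no matter how close $q=\inf_vP(X_v=1)$ is to one, your $p^*$ cannot exceed roughly $1/2$ for this field, so the scheme can never produce domination of $\eta_p$ for, say, $p=3/4$, which the proposition requires for every prescribed $p<1$.

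The step that fails is ``peeling off a single conditioning coordinate \dots incurs a multiplicative loss bounded by a function of $q$ that tends to $1$'': peeling off a coordinate with $x_i=0$ means conditioning on an event of probability at most $1-q$, and the distortion this causes does not vanish as $q\to1$ (in the example it contributes a factor tending to $1/2$). Nor is there any ``monotonicity in the conditioning'' available, since $X$ is not assumed positively associated. The genuine Liggett--Schonmann--Stacey argument circumvents this by never conditioning on zeros: the induction is arranged so that one only conditions on events of the form $\{X_u=1,\,u\in A\}$ (equivalently, on the coupled product field lying below $X$ on the explored region), whose probabilities stay bounded below. If you mean to invoke \cite[Theorem~7.65]{Gri99_book}, cite it for the proposition itself, as the paper does; the sketch you give is not the argument behind that theorem and cannot be completed as written.
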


\subsection{Upper tail estimate for the first passage time}\label{sect:APtype}
The aim of this subsection is to prove the following proposition, which extends range of $t$
to get a bound similar to \eqref{eq:Alv_tail}.

\begin{prop}\label{prop:APtype}
There exist constants $\Cl{APtype1},\Cl{APtype2},\Cl{APtype3}$ and $\Al{APtype4}$ such that
for all $x \in \Z^d$ and $t \geq \Cr{APtype1}\|x\|_1$,
\begin{align}\label{eq:APtype}
	\P(T(0,x) \geq t) \leq \Cr{APtype2}e^{-\Cr{APtype3}t^\Ar{APtype4}}.
\end{align}
\end{prop}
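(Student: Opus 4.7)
My plan is to extend~\eqref{eq:Alv_tail} from the native window $t\ge\|x\|_1^4$ to the full window $t\ge\Cr{APtype1}\|x\|_1$ by combining a renormalisation step with a concentration argument. Writing $n=\|x\|_1$, the case $t\ge n^4$ is given directly by~\eqref{eq:Alv_tail}, so the substance of the proposition is the range $\Cr{APtype1}n\le t<n^4$.

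For that range, I would fix a renormalisation scale $K\ge K_0$ with $K_0$ large enough to force supercritical percolation of the occupied boxes, and tile $\Z^d$ into boxes $B_y^K=Ky+[0,K)^d$. Declare a box \emph{occupied} if $B_y^K\cap\mathcal{I}\neq\varnothing$: the corresponding indicators form an i.i.d.\ Bernoulli field of parameter $1-(1-q)^{K^d}$, with $q=P(\omega(0)\ge 1)$, which by Proposition~\ref{prop:domi} dominates a supercritical Bernoulli percolation once $K_0$ is large. By Proposition~\ref{prop:chemical}, up to failure probability at most $Ce^{-cn}$, there is a good-box path from the box of $0$ to the box of $x$ in the occupied graph whose chemical length does not exceed $(1+\gamma)n/K$; picking the lex-first occupied site $w_i$ in each box along that path produces landmarks $0=w_0,w_1,\dots,w_M\in\mathcal{I}$ with $\|w_i-w_{i+1}\|_1\le 2dK$ and $M\le(1+\gamma)n/K$. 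Subadditivity then gives
\[
 T(0,x)\le\sum_{i=0}^{M-1}T(w_i,w_{i+1})+T(w_M,x).
\]

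For each link I would introduce an upper bound $\bar T_i\ge T(w_i,w_{i+1})$ obtained by restricting the passage time to use only frogs originating in a tubular neighbourhood of $[w_i,w_{i+1}]$ of width $O(K)$, so that the sequence $(\bar T_i)_i$ becomes finitely dependent. An adaptation of~\eqref{eq:Alv_tail} (together with translation invariance and a polynomial-in-$K$ union bound over admissible landmark positions within each box) yields $P(\bar T_i\ge s)\le Ce^{-cs^{\Ar{Alv3}}}$ for every $s\ge CK^4$, so each $\bar T_i$ has bounded expectation and stretched-exponential tail. Taking $\Cr{APtype1}$ larger than twice $ME[\bar T_1]/n$, a Nagaev-type deviation inequality for sums of finitely dependent random variables with stretched-exponential tails gives $P\!\left(\sum_{i}\bar T_i\ge t\right)\le Ce^{-ct^{\Ar{APtype4}}}$ with some $\Ar{APtype4}\in(0,1)$; combined with the percolation failure, the short-hop estimate on $T(w_M,x)$ and the polynomial-in-$K$ landmark bookkeeping, this delivers~\eqref{eq:APtype}.

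The principal obstacle I anticipate is the dependence between the link times: they all live on the same environment and the same family of random walks, and a naive union bound of chain length $M\asymp n/K$ times the per-link Alves tail is already vacuous at the scale $t=\Cr{APtype1}n$, which forces the use of a concentration inequality. Localising each link time by restricting the initial configuration of frogs to a tubular neighbourhood is the natural device that turns $\sum_i\bar T_i$ into a finitely-dependent sum amenable to a Nagaev-type estimate. A secondary bookkeeping obstacle is that the landmarks $w_i$ are themselves random functions of $\omega$, so translation invariance of~\eqref{eq:Alv_tail} must be applied fibrewise over the finitely many landmark positions within each box; this costs only a polynomial-in-$K$ factor which is harmlessly absorbed into the stretched exponential.
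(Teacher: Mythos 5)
Your architecture (renormalise, find a path of occupied boxes via Propositions~\ref{prop:domi} and \ref{prop:chemical}, chain the link times by subadditivity) matches the paper's in outline, but the step that is supposed to make the chain summable --- the claim that the tube-localised link times $\bar T_i$ satisfy $P(\bar T_i\ge s)\le Ce^{-cs^{\Ar{Alv3}}}$ and in particular have finite mean --- is false, and this is where the argument breaks. The bound \eqref{eq:Alv_tail} relies essentially on the active frogs recruiting sleeping frogs from an \emph{unbounded} region. Once you restrict to the finitely many frogs sitting in a tube of diameter $O(K)$, the restricted passage time is infinite with positive probability for $d\ge 3$ (every tube frog that wakes up may escape to infinity without ever hitting $w_{i+1}$ or any further site of $\mathcal{I}$ in the tube), and even for $d=2$ the hitting time of a fixed site by a bounded number of planar walks has only logarithmic-type tails. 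Consequently $E[\bar T_1]=\infty$, so neither the Nagaev-type concentration step nor your choice of $\Cr{APtype1}$ as a multiple of $ME[\bar T_1]/n$ can be carried out. (A secondary, repairable omission: Proposition~\ref{prop:chemical} only controls $d_{\eta_p}$ on the event that it is finite, so you must also place the two endpoints near the infinite cluster, which is what Proposition~\ref{prop:GM}(1) and the buffer balls $B_1(0,t^{1/4})$, $B_1(v(x),t^{1/4})$ accomplish in the paper.)

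The paper circumvents the main difficulty by a different mechanism. A renormalised site $v$ is declared white only if, in addition to every sub-box of $B_\infty(Nv,N)$ meeting $\mathcal{I}$, the \emph{unrestricted} passage time satisfies $T(y,z)\le N$ for all occupied $y,z$ in $B_\infty(Nv,N)$ with $\|y-z\|_1\le N^{1/4}$. The event $\{T(y,z)\le N\}$ is automatically local (no frog farther than $2N$ away can act within time $N$), so the white field is finitely dependent, and its marginal is close to $1$ by \eqref{eq:Alv_tail} since one only needs $N\ge\|y-z\|_1^4$ --- that is, only the probability that the local link time exceeds $N$, never its full tail. On the event that a white path of controlled chemical length exists, the total passage time is then bounded \emph{deterministically} by $N$ times the number of hops, so no concentration inequality for a sum of unbounded link times is needed at all. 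If you wish to keep your chaining scheme, you must build this capping of each link into the definition of a good box; as written, your localisation device does not deliver the tail bounds your concentration step requires.
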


Before the proof, we need some preparation.
Let $N$ be a positive integer to be chosen large enough later and set $N':=\lfloor N^{1/4}/(4d) \rfloor$.
Moreover, tile $\Z^d$ with copies of $(-N',N']^d$ such that each box is centered at a point in $\Z^d$
and each site in $\Z^d$ is contained in precisely one box.
We denote these boxes by $\Lambda_q$, $q \in \N$.
Then, a site $v$ of $\Z^d$ is said to be \emph{white} if the following conditions (1) and (2) hold:
\begin{enumerate}
	\item $\Lambda_q \cap \mathcal{I} \not= \emptyset$
		for all $q \geq 1$ with $\Lambda_q \subset B_\infty(Nv,N)$.
	\item $T(x,y) \leq N$ for all $x,y \in B_\infty(Nv,N) \cap \mathcal{I}$ with $\|x-y\|_1 \leq N^{1/4}$.
\end{enumerate}
We say that $v$ is \emph{black} otherwise.

\begin{lem}\label{lem:white}
We can find $p \in (p_c,1)$ and $N \geq 1$ such that
$(\1{\{ v \text{ is wihte} \}})_{v \in \Z^d}$ stochastically dominates $\eta_p$ and the infinite white cluster
$\mathcal{C}_\infty^\mathrm{w}:=\mathcal{C}_\infty((\1{\{ v \text{ is white} \}})_{v \in \Z^d})$ exists.
\end{lem}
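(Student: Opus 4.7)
The plan is to verify the hypotheses of Proposition~\ref{prop:domi}: that the field $(\1{\{v \text{ is white}\}})_{v \in \Z^d}$ is finitely dependent and that its marginal $P(v \text{ is white})$ can be made arbitrarily close to~$1$ by taking $N$ large. Once these are established, fixing any $p \in (p_c,1)$ and choosing $N$ correspondingly large gives the claimed stochastic domination by $\eta_p$, and the existence of $\mathcal{C}_\infty^\mathrm{w}$ then follows by monotone coupling from the almost sure existence of the infinite cluster of $\eta_p$.

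I would first bound $P(v \text{ is black})$ by a union bound over the failures of conditions~(1) and~(2). The number of boxes $\Lambda_q$ contained in $B_\infty(Nv,N)$ is $O((N/N')^d) = O(N^{3d/4})$, and since $\omega$ is not concentrated at $0$,
\begin{align*}
 P(\Lambda_q \cap \mathcal{I} = \emptyset) = P(\omega(0)=0)^{(2N')^d},
\end{align*}
which decays super-polynomially in $N$ because $N' \sim N^{1/4}$. For condition~(2), the number of ordered pairs $(x,y)$ in $B_\infty(Nv,N)$ with $\|x-y\|_1 \leq N^{1/4}$ is $O(N^{5d/4})$; since $N \geq \|x-y\|_1^4$ for such pairs, \eqref{eq:Alv_tail} together with translation invariance yields
\begin{align*}
 P(T(x,y) \geq N,\, \omega(x) \geq 1) \leq \Cr{Alv1} e^{-\Cr{Alv2} N^{\Ar{Alv3}}}.
\end{align*}
A union bound over the relevant pairs then shows $P(v \text{ is black}) \to 0$ as $N \to \infty$.

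The main obstacle is verifying finite dependence, since condition~(2) a priori involves the unrestricted first passage time $T(x,y)$, which depends on the entire environment $\omega$ and on all of the random walks $S_\cdot(\cdot,\cdot)$. To localize it, I would use a deterministic displacement bound: on $\{T(x,y) \leq N\}$, any realizing sequence $x = x_0, x_1, \dots, x_m = y$ with the associated hitting times $\tau(x_i, x_{i+1})$ satisfies $\|x_{i+1} - x_i\|_\infty \leq \tau(x_i, x_{i+1})$ and $\sum_i \tau(x_i, x_{i+1}) \leq N$, so every intermediate site $x_i$ lies in $B_\infty(x_0, N) \subset B_\infty(Nv, 2N)$, while each realizing random walk runs for at most $N$ steps and therefore stays in $B_\infty(Nv, 3N)$. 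Consequently $\{T(x,y) \leq N\}$ is measurable with respect to $\omega$ and the first $N$ steps of the walks $(S_k(z,\ell))_{k \leq N}$ for $z \in B_\infty(Nv, 3N)$, while condition~(1) is trivially measurable with respect to $\omega$ on $B_\infty(Nv, N)$. Hence the white field is finitely dependent: any two sites $v_1, v_2 \in \Z^d$ with $\|v_1 - v_2\|_\infty > 6$ have disjoint dependency regions. Combining this with the marginal estimate above, Proposition~\ref{prop:domi} provides the required stochastic domination and completes the proof.
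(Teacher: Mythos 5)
Your proposal is correct and follows essentially the same route as the paper: localize the whiteness event to a ball of radius $O(N)$ around $Nv$ using the deterministic displacement bound $\|x_{i+1}-x_i\|_1\leq\tau(x_i,x_{i+1})$ to get finite dependence, show $P(v\text{ is black})\to 0$ by a union bound combining the super-polynomial decay of $P(\Lambda_q\cap\mathcal{I}=\emptyset)$ with \eqref{eq:Alv_tail}, and then invoke Proposition~\ref{prop:domi}. The only point treated more lightly than in the paper is the existence (and implicit uniqueness) of $\mathcal{C}_\infty^\mathrm{w}$, which the paper delegates to the argument of Mourrat's Proposition~5.2, but your monotone-coupling remark captures the essential idea.
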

\begin{proof}
Let us first check that for every $v \in \Z^d$ the event $\{ v \text{ is white} \}$ depends only on states in $B_1(Nv,2N)$.
It suffices to show that for all $x,y \in B_1(Nv,N)$, the event $\{ T(x,y) \leq N \}$ depends only on states in $B_1(Nv,2N)$.
By the definition of the first passage time, the event $\{ T(x,y) \leq N \}$ can be replaced with the event that
there exist $m \geq 1$ and $x_0,x_1,\dots,x_m \in \Z^d$ with $x_0=x$ and $x_m=y$ such that
\begin{align*}
	\sum_{i=0}^{m-1} \tau(x_i,x_{i+1}) \leq N.
\end{align*}
Since every frog can only move to an adjacent site at each step,
the above sum is strictly bigger than $N$ provided $\| x_i-x_0 \|_1>N$ for some $1 \leq i \leq m$.
Hence, $x_i$'s must satisfy $\|x_i-Nv\|_1 \leq 2N$.
This means that the event $\{ T(x,y) \leq N \}$ depends only on states in $B_1(Nv,2N)$.

We next show that $\inf_{v \in \Z^d} P(v \text{ is white})$ converges to one as $N \to \infty$.
The union bound proves
\begin{align*}
	P(0 \text{ is black})
	\leq \sum_{\substack{q \geq 1\\ \Lambda_q \subset B_\infty(0,N)}}
		P(\Lambda_q \cap \mathcal{I}=\emptyset)
	\quad 	+\sum_{\substack{x,y \in B_\infty(0,N)\\ \|x-y\|_1 \leq N^{1/4}}} \P(T(0,y-x)>N).
\end{align*}
The first summation is not larger than $cN^dP(0 \not\in \mathcal{I})^{c'N^{d/4}}$
for some constants $c$ and $c'$, and it clearly goes to zero as $N \to \infty$.
By \eqref{eq:Alv_tail}, we can also see that the second summation vanishes as $N \to \infty$.
Therefore, from translation invariance, $\inf_{v \in \Z^d} P(v \text{ is white})$ converges to one as $N \to \infty$.

With these observations, the proof is complete by using Proposition~\ref{prop:domi} and
the same strategy taken in the proof of Proposition~5.2 of \cite{Mou12}.
\end{proof}

After the preparation above, we move to the proof of Proposition~\ref{prop:APtype}.

\begin{proof}[\bf Proof of Proposition~\ref{prop:APtype}]
Without loss of generality, we can assume $\|x\|_1 \geq d^4$.
Let $p$ and $N$ be the constants appearing in Lemma~\ref{lem:white}.
Consider the events
\begin{align*}
	&\Gamma_1:=\biggl\{
		\begin{minipage}{8.5truecm}
			$\exists v_1 \in \mathcal{C}_\infty^\mathrm{w} \cap B_1(0,t^{1/4})$,
			$\exists v_2 \in \mathcal{C}_\infty^\mathrm{w} \cap B_1(v(x),t^{1/4})$\\
			such that $d^\mathrm{w}(v_1,v_2)<4\Cr{GM1}t$
		\end{minipage} \biggr\},\\
	&\Gamma_2:=\biggl\{
		\begin{minipage}{8.8truecm}
			$T(0,y)<(3N)^4t$ and $T(z,x)<(3N)^4t$ for all\\
			$y \in B_1(0,2Nt^{1/4})$ and $z \in B_1(Nv(x),2Nt^{1/4}) \cap \mathcal{I}$
		\end{minipage} \biggr\},
\end{align*}
where $d^\mathrm{w}(\cdot,\cdot)$ is the chemical distance for $(\1{\{ v \text{ is white} \}})_{v \in \Z^d}$
and $v(x)$ is the site $v$ of $\Z^d$ minimizing $\|Nv-x\|_\infty$ with a deterministic rule to break ties.
Note that on the event $\Gamma_1 \cap \Gamma_2 \cap \{ 0 \in \mathcal{I} \}$,
\begin{align*}
	T(0,x)<\{ 2(3N)^4+4\Cr{GM1}N^2 \}t,\qquad t \geq \|x\|_1.
\end{align*}

To complete the proof, we shall estimate $\P(\Gamma_1^\complement)$ and $\P(\Gamma_2^\complement)$.
Lemma~\ref{lem:white} implies that $P(\Gamma_1^\complement)$ is bounded from above by
\begin{align}\label{eq:macro}
\begin{split}
	&P \biggl(
		\begin{minipage}{9truecm}
			$d_{\eta_p}(v_1,v_2) \geq 4\Cr{GM1}t$
			for all $v_1 \in \mathcal{C}_\infty(\eta_p) \cap B_1(0,t^{1/4})$\\
			and $v_2 \in \mathcal{C}_\infty(\eta_p) \cap B_1(v(x),t^{1/4})$ 
		\end{minipage} \biggr)\\
	&\leq 2P\bigl( \mathcal{C}_\infty(\eta_p) \cap B_1(0,t^{1/4})=\emptyset \bigr)
		+\sum_{\substack{v_1 \in B_1(0,t^{1/4})\\ v_2 \in B_1(v(x),t^{1/4})}}P(4\Cr{GM1}t \leq d_{\eta_p}(v_1,v_2)<\infty).
\end{split}
\end{align}
From (1) of Proposition~\ref{prop:GM}, the first term of the right side in \eqref{eq:macro} is not larger than
$2\Cr{GM4}e^{-\Cr{GM5}t^{1/4}}$.
Note that for $t \geq \|x\|_1$, $v_1 \in B_1(0,t^{1/4})$ and $v_2 \in B_1(v(x),t^{1/4})$,
\begin{align*}
	\|v_1-v_2\|_1
	\leq 2t^{1/4}+\frac{1}{N}\|Nv(x)-x\|_1+\frac{\|x\|_1}{N}
	\leq 4t.
\end{align*}
This combined with (2) of Proposition~\ref{prop:GM} shows that the second term of the right side in \eqref{eq:macro}
is exponentially small in $t$.
Consequently, $\P(\Gamma_1^\complement)$ decays faster than $e^{-\Cr{GM5}t^{1/4}}$.
On the other hand, one has for $t \geq \|x\|_1$ and $z \in B_1(Nv(x),2Nt^{1/4})$,
\begin{align*}
	\|x-z\|_1 \leq \|x-Nv(x)\|_1+\|Nv(x)-z\|_1 \leq 3Nt^{1/4}.
\end{align*}
This together with \eqref{eq:Alv_tail} proves that $\P(\Gamma_2^\complement)$ is bounded from above by
a multiple of $t^{d/2} \exp\{ -\Cr{Alv2}(3N)^{4\Ar{Alv3}}t^\Ar{Alv3} \}$.
Therefore, \eqref{eq:APtype} immediately follows from the above bounds for
$\P(\Gamma_1^\complement)$ and $\P(\Gamma_2^\complement)$.
\end{proof}

We close this section with the corollary of Proposition~\ref{prop:APtype}.

\begin{cor}\label{cor:jump}
Suppose that $E[\omega(0)]<\infty$.
Then, there exist constants $\Cl{jump1}$, $\Cl{jump2}$ and $\Al{jump3}$ such that for all $x \in \Z^d$ and $t>0$,
\begin{align}\label{eq:neiest}
\begin{split}
	\P \biggl(
	\begin{minipage}{7.2truecm}
		$\exists v_1,v_2 \in \mathcal{I}$ with $\|v_1-v_2\|_1 \geq t$ such that\\
		$T(0,x)=T(0,v_1)+\tau(v_1,v_2)+T(v_2,x)$
	\end{minipage} \biggr)
	\leq \Cr{jump1} \|x\|_1^{2d}e^{-\Cr{jump2}t^\Ar{jump3}}.
\end{split}
\end{align}
\end{cor}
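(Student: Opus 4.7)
The plan is to deploy Proposition~\ref{prop:APtype} alongside a union bound over pairs $(v_1,v_2)$ and a first-passage estimate for the simple random walks starting at $v_1$. On the event in question, since any $\tau$-sum or $T$-value along a nearest-neighbor path dominates the $\ell^1$-distance between its endpoints, we automatically have
\[
	\|v_1\|_1 \leq T(0,v_1) \leq T(0,x),\quad \|x-v_2\|_1 \leq T(v_2,x) \leq T(0,x),\quad t \leq \|v_1-v_2\|_1 \leq \tau(v_1,v_2) \leq T(0,x).
\]
Thus on $\{T(0,x) \leq M\}$ the pair $(v_1,v_2)$ is confined to $B_1(0,M+\|x\|_1)^2$ and the hitting time $\tau(v_1,v_2)$ is at most $M$.

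Splitting according to whether $T(0,x) > M$ and applying Proposition~\ref{prop:APtype} (for $M \geq \Cr{APtype1}\|x\|_1$) to one piece and a crude union bound to the other,
\[
	\P(\text{event}) \leq \Cr{APtype2} e^{-\Cr{APtype3} M^{\Ar{APtype4}}} + \sum_{\substack{v_1,v_2 \in B_1(0,M+\|x\|_1)\\ \|v_1-v_2\|_1 \geq t}} \P\bigl(v_1,v_2 \in \mathcal{I},\ \tau(v_1,v_2) \leq M\bigr).
\]
For each pair, the independence of $\omega$ from the random walks, a union bound over the at most $\omega(v_1)$ walks based at $v_1$ (followed by taking expectation, which is finite since $E[\omega(0)]<\infty$), and the coordinatewise Azuma--Hoeffding bound $P(\max_{0 \leq k \leq M} \|S_k(v_1,\ell)-v_1\|_\infty \geq t/d) \leq C e^{-c t^2/M}$ together give a per-pair estimate $C e^{-c t^2/M}$. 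Summing over the at most $C(M+\|x\|_1)^{2d}$ pairs yields
\[
	\P(\text{event}) \leq C e^{-c M^{\Ar{APtype4}}} + C(M+\|x\|_1)^{2d} e^{-c t^2/M}.
\]

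To conclude I would choose $M = \max\bigl(\Cr{APtype1}\|x\|_1,\ t^{2/(1+\Ar{APtype4})}\bigr)$, which balances the two exponents. When $M = \Cr{APtype1}\|x\|_1$ (moderate $t$), the prefactor is already at most $C\|x\|_1^{2d}$; when $M \asymp t^{2/(1+\Ar{APtype4})} \gg \|x\|_1$ (large $t$), the polynomial factor $t^{4d/(1+\Ar{APtype4})}$ is absorbed by the dominant exponential once $t$ is past a constant, while the trivial bound (right-hand side $\geq 1$) covers the small-$t$ range. The hard part will be the intermediate-$t$ regime: forcing the Azuma bound $e^{-c t^2/\|x\|_1}$ to imply the target $e^{-c t^{\Ar{jump3}}}$ uniformly requires taking $\Ar{jump3}$ strictly smaller than $2\Ar{APtype4}/(1+\Ar{APtype4})$, and this choice must be patched with the trivial-bound fallback so that every $t > 0$ is covered. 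This polynomial-versus-exponential bookkeeping, paired with the optimisation of $M$, is the only delicate ingredient of the proof.
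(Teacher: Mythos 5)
Your overall architecture (confine the pair $(v_1,v_2)$ via Proposition~\ref{prop:APtype}, then a union bound combined with a displacement estimate for the walks based at $v_1$) is the same as the paper's, but there is a genuine gap in the intermediate range of $t$, and it is exactly the range where the corollary is later used. Your displacement estimate runs the walks at $v_1$ over the time horizon $M$, and $M$ is forced to satisfy $M \geq \Cr{APtype1}\|x\|_1$, since otherwise Proposition~\ref{prop:APtype} cannot control $\P(T(0,x)>M)$. Hence in the moderate regime your per-pair bound is $e^{-ct^2/\|x\|_1}$, and for $t$ between a power of $\log\|x\|_1$ and $\sqrt{\|x\|_1}$ this is not of the form $e^{-c't^{\alpha}}$ for any $\alpha>0$: taking $t=\|x\|_1^{1/4}$ gives $e^{-ct^2/\|x\|_1}=e^{-c\|x\|_1^{-1/2}}\to 1$, while the target $\|x\|_1^{2d}e^{-c't^{\alpha}}=\|x\|_1^{2d}e^{-c'\|x\|_1^{\alpha/4}}\to 0$. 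The ``right-hand side $\geq 1$'' fallback only covers $t\lesssim(\log\|x\|_1)^{1/\alpha}$, so no choice of $\Ar{jump3}$ patches the hole, and no choice of $M$ within your scheme helps because of the constraint $M\gtrsim\|x\|_1$. This regime cannot be discarded: the corollary is applied in Proposition~\ref{prop:tsdif} and Theorem~\ref{thm:tscon} with $t$ ranging from $(1+\log\|x\|_1)^{1/\Ar{scon4}}$ up to $\gamma\sqrt{\|x\|_1}$. (Your large-$t$ balancing $M=t^{2/(1+\Ar{APtype4})}$ is fine; the problem is only the window $(\log\|x\|_1)^{C}\lesssim t\lesssim\sqrt{\|x\|_1}$.)

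The missing idea is to localize the time horizon of the walk estimate to the scale $\|v_1-v_2\|_1\geq t$ rather than $\|x\|_1$. On the event in question, subadditivity gives $T(0,x)\le T(0,v_1)+T(v_1,v_2)+T(v_2,x)$, which forces $\tau(v_1,v_2)\le T(v_1,v_2)$, while $T(v_1,v_2)\le\tau(v_1,v_2)$ holds trivially (one-step path); hence $\tau(v_1,v_2)=T(v_1,v_2)$. By translation invariance it then suffices to bound $\P(\tau(0,z)=T(0,z))$ for $\|z\|_1\ge t$, splitting on whether some frog at the origin attains $\ell^1$-displacement $\|z\|_1$ within time $\Cr{APtype1}\|z\|_1$. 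If yes, your Azuma-type estimate over a window of length $O(\|z\|_1)$ (not $O(\|x\|_1)$) gives $E[\omega(0)]\,ce^{-t^{1/2}}$; if no, then $T(0,z)=\tau(0,z)>\Cr{APtype1}\|z\|_1$ and a second application of Proposition~\ref{prop:APtype}, now at scale $\|z\|_1\ge t$, gives $\Cr{APtype2}e^{-\Cr{APtype3}(\Cr{APtype1}t)^{\Ar{APtype4}}}$. This yields a per-pair bound $e^{-ct^{\alpha}}$ uniformly in $t$ and closes the gap.
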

\begin{proof}
Since the left side of \eqref{eq:neiest} is smaller than or equal to $\P(T(0,x) \geq t)$,
the corollary immediately follows from Proposition~\ref{prop:APtype} provided $t \geq \Cr{APtype1}\|x\|_1$.

Assume $t<\Cr{APtype1}\|x\|_1$.
We use Proposition~\ref{prop:APtype} to obtain that the left side of \eqref{eq:neiest} is bounded from above by
\begin{align}\label{eq:neiexp}
\begin{split}
	&\Cr{APtype2}\exp\{ -\Cr{APtype3}(\Cr{APtype1}\|x\|_1)^\Ar{APtype4} \}\\
	&+\sum_{\substack{v_1,v_2 \in B_1(0,\Cr{APtype1}\|x\|_1)\\ \|v_1-v_2\|_1 \geq t}}
		\P(\tau(0,v_2-v_1)=T(0,v_2-v_1))\\
	&\leq \Cr{APtype2}e^{-\Cr{APtype3}t^\Ar{APtype4}}
		+\sum_{\substack{v_1,v_2 \in B_1(0,\Cr{APtype1}\|x\|_1)\\ \|v_1-v_2\|_1 \geq t}} \{ I_1(v_2-v_1)+I_2(v_2-v_1) \},
\end{split}
\end{align}
where for $z \in \Z^d$,
\begin{align*}
	&I_1(z):=\P \Biggl( \max_{\substack{0 \leq k \leq \Cr{APtype1}\|z\|_1\\1 \leq \ell \leq \omega(0)}}
		\|S_k(0,\ell)\|_1 \geq \|z\|_1 \Biggr),\\
	&I_2(z):=\P \Biggl( \max_{\substack{0 \leq k \leq \Cr{APtype1}\|z\|_1\\1 \leq \ell \leq \omega(0)}}
		\|S_k(0,\ell)\|_1<\|z\|_1,\,\tau(0,z)=T(0,z) \Biggr).
\end{align*}
To estimate $I_1(v_2-v_1)$, we rely on the following simple large deviation estimate for the simple random walk,
see \cite[Lemma~1.5.1]{Law91_book}:
For any $\gamma>0$, there exists a constant $c$ (which may depend on $\gamma$) such that for all $n,u \geq 0$,
\begin{align*}
	P\Bigl( \max_{0 \leq k \leq n} \|S_k(0,1)\|_1 \geq \gamma u\sqrt{n} \Bigr)
	\leq ce^{-u}.
\end{align*}
Fix $v_1,v_2 \in B_1(0,\Cr{APtype1}\|x\|_1)$ with $\|v_1-v_2\|_1 \geq t$
and set $\gamma=\Cr{APtype1}^{-1/2}$, $n=\Cr{APtype1}\|v_2-v_1\|_1$ and $u=\|v_2-v_1\|_1^{1/2}$.
Then,
\begin{align*}
	I_1(v_2-v_1)
	&\leq \sum_{L=1}^\infty \P(\omega(0)=L)
		\sum_{\ell=1}^L P\Bigl( \max_{0 \leq k \leq \Cr{APtype1}\|v_2-v_1\|_1} \|S_k(0,\ell)\|_1 \geq \|v_2-v_1\|_1 \Bigr)\\
	&\leq \E[\omega(0)] ce^{-t^{1/2}}.
\end{align*}
We use Proposition~\ref{prop:APtype} again to obtain for $v_1,v_2 \in B_1(0,\Cr{APtype1}\|x\|_1)$ with $\|v_1-v_2\|_1 \geq t$,
\begin{align*}
	I_2(v_2-v_1)
	&\leq \P(\Cr{APtype1}\|v_2-v_1\|_1<\tau(0,v_2-v_1)=T(0,v_2-v_1))\\
	&\leq \Cr{APtype2} \exp\{ -\Cr{APtype3}(\Cr{APtype1}t)^\Ar{APtype4} \}.
\end{align*}
Therefore, \eqref{eq:neiest} follows from \eqref{eq:neiexp} and these bounds for $I_1(v_2-v_1)$ and $I_2(v_2-v_1)$.
\end{proof}

\section{Right tail large deviation bound}\label{sect:right}
This section gives the proof of Theorem~\ref{thm:right}.
We basically follow the approach taken in \cite[Subsection~3.3]{GarMar07}.
Let us first prepare some notation and lemmata.

\begin{lem}\label{lem:modify}
For each $x \in \Z^d$, $P \hyphen \as$ and in $L^1$,
\begin{align}\label{eq:modify}
	\mu(x)
	=\lim_{k \to \infty} \frac{1}{k}T^*(0,kx)
	= \lim_{k \to \infty} \frac{1}{k}E[T^*(0,kx)]
	= \inf_{k \geq 1} \frac{1}{k}E[T^*(0,kx)].
\end{align}
\end{lem}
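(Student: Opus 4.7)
The plan is to apply Kingman's subadditive ergodic theorem to the array $X_{m,n} := T^*(mx,nx)$, producing almost sure and $L^1$ convergence of $T^*(0,kx)/k$ to a deterministic limit $\gamma_x$, and then to identify $\gamma_x = \mu(x)$ by comparison with the original first passage time on the event $\{\omega(0) \geq 1\}$. The equality $\lim = \inf$ at the expectation level follows automatically from the subadditivity of $k \mapsto E[T^*(0,kx)]$ via Fekete's lemma, using the translation-invariance identity $T^*(kx,(k+k')x) \stackrel{d}{=} T^*(0,k'x)$.

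Before invoking Kingman I would verify the usual hypotheses. Subadditivity $X_{m,n} \leq X_{m,k} + X_{k,n}$ is inherited from $T^*$, and joint stationarity and ergodicity of $(X_{m+\ell,n+\ell})_{\ell}$ follow from the ergodicity of the shift by $x$ on the i.i.d.~product space carrying $\omega$ and the family of walks. The only nontrivial hypothesis is integrability of $T^*(0,x)$, which I would establish by decomposing according to the locations of $0^*$ and $x^*$: for any threshold $s$,
\begin{align*}
P(T^*(0,x) \geq t)
&\leq P(\|0^*\|_1 > s) + P(\|x^* - x\|_1 > s)\\
&\quad + \sum_{\substack{\|y\|_1 \leq s\\ \|z-x\|_1 \leq s}} P(T(y,z) \geq t,\,\omega(y) \geq 1).
\end{align*}
Since $\mathcal{I}$ has positive density, the first two terms are bounded by $(1 - P(\omega(0) \geq 1))^{|B_1(0,s)|}$, while by translation invariance each summand in the third term equals $P(\omega(0) \geq 1)\,\P(T(0,z-y) \geq t)$, to which Proposition~\ref{prop:APtype} applies as soon as $t$ dominates a multiple of $\|z - y\|_1 \leq 2s + \|x\|_1$. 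Choosing $s$ proportional to $t$ then yields a stretched-exponential tail on $T^*(0,x)$, more than enough for $L^1$.

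To identify $\gamma_x$ with $\mu(x)$ I would work on $\{\omega(0) \geq 1\}$, where $0^* = 0$ and hence $T^*(0,kx) = T(0,(kx)^*)$. A Borel--Cantelli argument based on $P(B_1(kx,\epsilon k) \cap \mathcal{I} = \emptyset) \leq (1 - P(\omega(0) \geq 1))^{|B_1(0,\epsilon k)|}$ shows that $(kx)^*/k \to x$ almost surely. Combining this with the shape theorem of Proposition~\ref{prop:Alv_shape} and the Lipschitz bound $|\mu((kx)^*) - k\mu(x)| \leq \mu(\xi_1)\|(kx)^* - kx\|_1$ extracted from \eqref{eq:tc_bound} yields $T(0,(kx)^*)/k \to \mu(x)$ almost surely on $\{\omega(0) \geq 1\}$, and since $\gamma_x$ is deterministic and this event has positive probability, $\gamma_x = \mu(x)$.

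The step I expect to be the main obstacle is the integrability bound, because the random endpoints $0^*$ and $x^*$ depend on $\omega$ in a way that is correlated with $T$ itself; separating this dependence by summing over the possible endpoint locations, and then absorbing the combinatorial factor into the stretched-exponential tail of Proposition~\ref{prop:APtype}, is the essential technical step of the proof.
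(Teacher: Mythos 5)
Your proposal is correct and follows essentially the same route as the paper: integrability of $T^*(0,x)$ via decomposition over the possible locations of $0^*$ and $x^*$ combined with Proposition~\ref{prop:APtype}, then the subadditive ergodic theorem applied to $T^*(ix,jx)$, with the limit identified through Proposition~\ref{prop:Alv_shape} on the positive-probability event $\{0\in\mathcal{I}\}$. The only cosmetic difference is that the paper identifies the limit along the subsequence $\{k: kx\in\mathcal{I}\}$ (where $T^*(0,kx)=T(0,kx)$), whereas you control $(kx)^*$ for all $k$; both suffice.
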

\begin{proof}
From Proposition~\ref{prop:Alv_shape}, we have on the event $\{ 0 \in \mathcal{I} \}$ of positive probability,
\begin{align*}
	\mu(x)
	= \lim_{\substack{k \to \infty\\ kx \in \mathcal{I}}} \frac{1}{k}T(0,kx)
	= \lim_{\substack{k \to \infty\\ kx \in \mathcal{I}}} \frac{1}{k}T^*(0,kx).
\end{align*}
Therefore, once the integrability of $T^*(0,x)$ is proved, \eqref{eq:modify} follows from the subadditive ergodic theorem
for the process $T^*(ix,jx)$, $0 \leq i<j$, $i,j \in \N_0$.

For the integrability,
\begin{align*}
\begin{split}
	E[T^*(0,x)]
	&\leq \int_0^\infty P \Bigl (\|0^*\|_1>\frac{t}{3\Cr{APtype1}} \Bigr)\,dt
		+\int_0^\infty P \Bigl( \|x-x^*\|_1>\frac{t}{3\Cr{APtype1}} \Bigr)\,dt\\
	&\quad	+\int_0^\infty P \Bigl( T^*(0,x) \geq t,\,\|0^*\|_1 \leq \frac{t}{3\Cr{APtype1}},
		\,\|x-x^*\|_1 \leq \frac{t}{3\Cr{APtype1}} \Bigr)\,dt.
\end{split}
\end{align*}
It is clear that the first and second terms in the right side are finite.
Moreover, the third term is not larger than
\begin{align*}
	3\Cr{APtype1}\|x\|_1
	+\sum_{\substack{y \in B_1(0,t/(3\Cr{APtype1}))\\ z \in B_1(x,t/(3\Cr{APtype1}))}}
	\int_{3\Cr{APtype1}\|x\|_1}^\infty \P(T(0,z-y) \geq t)\,dt,
\end{align*}
and the integrability of $T^*(0,x)$ follows by using Proposition~\ref{prop:APtype}.
\end{proof}

We denote by $\mathcal{S}_d$ the symmetric group on $\{1,\dots,d\}$.
For each $x=(x_1,\dots,x_d) \in \R^d$, $\sigma \in \mathcal{S}_d$ and $\epsilon \in \{ +1,-1 \}^d$, we define
\begin{align*}
	\Psi_{\sigma,\epsilon}(x):=\sum_{i=1}^d \epsilon(i)x_{\sigma(i)}\xi_i.
\end{align*}
Then, $\mathcal{O}(\Z^d):=\{ \Psi_{\sigma,\epsilon}: \sigma \in \mathcal{S}_d,\,\epsilon \in \{+1,-1\}^d \}$
is the group of orthogonal transformations that preserve the grid $\Z^d$.
Consequently, its elements also preserve the $\ell^1$-norm $\|\cdot\|_1$ and the time constant $\mu(\cdot)$.
For $x \in \R^d$ and $(g_1,\dots,g_d) \in (\mathcal{O}(\Z^d))^d$,
the linear map $L_x^{g_1,\dots,g_d}$ is defined by
\begin{align*}
	L_x^{g_1,\dots,g_d}(y):=\sum_{i=1}^dy_ig_i(x),\qquad y=(y_1,\dots,y_d) \in \R^d.
\end{align*}

To study the first passage time in each direction $x$, we want to find a basis of $\R^d$ adapted to the studied direction,
i.e., made of images of $x$ by elements of $\mathcal{O}(\Z^d)$.
The following technical lemma, which is obtained by Garet--Marchand~\cite[Lemma~2.2]{GarMar07}, gives the existence of such a basis.

\begin{lem}\label{lem:sym}
For each $x \in \R^d$, there exists a family $(g_{1,x},g_{2,x},\dots,g_{d,x}) \in (\mathcal{O}(\Z^d))^d$
with $g_{1,x}=\Id_{\R^d}$ such that the linear map $L_x:=L_x^{g_{1,x},\dots,g_{d,x}}$ satisfies
\begin{align*}
	\Cr{sym1} \|x\|_1\|y\|_1 \leq \|L_x(y)\|_1 \leq \|x\|_1\|y\|_1,\qquad y \in \R^d,
\end{align*}
where $\Cl{sym1}$ is a universal constant not depending on $x$, $y$ and $(g_{1,x},g_{2,x},\dots,g_{d,x})$.
\end{lem}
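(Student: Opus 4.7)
The plan is to prove the two inequalities separately. The upper bound is elementary: each $g_{i,x}\in\mathcal{O}(\Z^d)$ is a signed permutation, hence an $\ell^1$-isometry with $\|g_{i,x}(x)\|_1=\|x\|_1$, so the triangle inequality at once yields
\begin{align*}
\|L_x(y)\|_1 = \Bigl\|\sum_{i=1}^{d} y_i\, g_{i,x}(x)\Bigr\|_1 \leq \sum_{i=1}^{d}|y_i|\,\|x\|_1 = \|x\|_1\|y\|_1.
\end{align*}

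For the lower bound I would use a softness/compactness argument. Since $L_x^{g_1,\ldots,g_d}$ is linear in $x$, homogeneity reduces the task to $x$ lying on the unit $\ell^1$-sphere $S:=\{x\in\R^d:\|x\|_1=1\}$. For such $x$ I would define
\begin{align*}
\phi(x):=\max_{(g_2,\ldots,g_d)\in\mathcal{O}(\Z^d)^{d-1}}\ \inf_{\|y\|_1=1}\bigl\|L_x^{\Id,g_2,\ldots,g_d}(y)\bigr\|_1.
\end{align*}
Because $\mathcal{O}(\Z^d)^{d-1}$ is finite and, for each fixed tuple, the map $x\mapsto\inf_{\|y\|_1=1}\|L_x(y)\|_1$ is $1$-Lipschitz in $x$ (the smallest $\ell^1$-singular value is $1$-Lipschitz in operator norm, which itself depends linearly on $x$), the function $\phi$ is continuous on $S$. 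Once I establish $\phi>0$ pointwise on $S$, compactness delivers $\Cr{sym1}:=\inf_{x\in S}\phi(x)>0$, and rescaling $x\mapsto x/\|x\|_1$ propagates the bound to every $x\neq 0$.

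The crux is therefore positivity of $\phi$: for each $x\neq 0$, I must exhibit a tuple $(g_2,\ldots,g_d)$ making $\bigl(x,g_2(x),\ldots,g_d(x)\bigr)$ a basis of $\R^d$, since this renders $L_x^{\Id,g_2,\ldots,g_d}$ invertible and hence the corresponding infimum strictly positive. I would deduce the existence of such a tuple from the irreducibility of the natural action of $\mathcal{O}(\Z^d)$ on $\R^d$: if $V\subset\R^d$ is a linear subspace stable under all signed permutations and contains some nonzero $v$, pick an index $i$ with $v_i\neq 0$; subtracting from $v$ its image under the element of $\mathcal{O}(\Z^d)$ that negates the $i$-th coordinate gives $2v_i e_i\in V$, and coordinate permutations then place every $e_j$ in $V$, forcing $V=\R^d$. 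Hence the orbit of $x$ spans $\R^d$, and one may select $d$ of its members (including $\Id(x)=x$) forming a basis. The main subtlety here is conceptual rather than analytic: one must resist the temptation to fix a single tuple $(g_2,\ldots,g_d)$ uniformly in $x$, for no such universal choice works — the naive ``swap $1$ and $i$'' construction already collapses on $x=(1,\ldots,1)$ — and instead let the tuple depend on $x$, relying on the compactness/finiteness step to extract a constant $\Cr{sym1}$ that does not.
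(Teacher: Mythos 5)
Your argument is correct, but note that the paper itself does not prove this lemma at all: it is imported verbatim from Garet--Marchand \cite[Lemma~2.2]{GarMar07}, so you have supplied a proof where the paper only gives a citation. Your route is also genuinely different from the one in that reference. The upper bound is the same easy triangle-inequality computation. For the lower bound, Garet--Marchand argue constructively (after reducing by symmetry to an ordered, nonnegative $x$, they exhibit explicit signed permutations and bound the resulting matrix from below, which yields an explicit value of the constant), whereas you give a soft existence proof: irreducibility of the signed-permutation representation shows the orbit of any $x\neq 0$ spans $\R^d$, so a basis containing $x$ can be extracted from the orbit, making $\phi(x)>0$; then $1$-Lipschitz continuity of the smallest $\ell^1$-singular value, finiteness of $\mathcal{O}(\Z^d)^{d-1}$, compactness of the unit $\ell^1$-sphere, and homogeneity of $x\mapsto L_x$ give a uniform constant $\Cr{sym1}>0$. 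All the steps check out (including the extraction of a basis containing $\Id(x)=x$ via the exchange lemma, and the observation that no single tuple works for all $x$), and the only thing you lose relative to the constructive proof is an explicit value of $\Cr{sym1}$ --- which is irrelevant here, since the paper only ever uses the existence of some universal constant. The degenerate case $x=0$ is trivial and worth a half-sentence, but that is cosmetic.
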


\begin{proof}[\bf Proof of Theorem~\ref{thm:right}]
We fix an arbitrary $\epsilon>0$ and break the proof into three steps.

\paragraph{\bf Step~1}
In this step, we choose appropriate constants for our proof.
By \eqref{eq:tc_bound}, $\mu(y) \geq 1$ holds for all $y \in \R^d$ with $\|y\|_1=1$.
Hence, there exists $\delta>0$ such that for all $y \in \R^d$ with $\|y\|_1=1$,
\begin{align}\label{eq:direction}
	\biggl( 1+\frac{3\delta}{2\Cr{APtype1}} \biggr)(1+\delta)^2\mu(y)+2\delta<\mu(y)(1+\epsilon)
\end{align}
and
\begin{align*}
	\delta<\frac{\Cr{APtype1}}{2}.
\end{align*}
To shorten notation, write
\begin{align*}
	\beta:=\frac{\delta}{2\Cr{APtype1}}<\frac{1}{4}.
\end{align*}
Take $M \in \N$ large enough to have
\begin{align}\label{eq:choiceM}
	M \geq \frac{d}{\delta} \max\biggl\{ \frac{\mu(\xi_1)}{2},\frac{8\Cr{APtype1}}{\Cr{sym1}} \biggr\} \geq 4,
\end{align}
and choose $p \in (0,1)$ to satisfy
\begin{align}\label{eq:p}
	p>p'\biggl( \frac{\beta}{1+2\beta} \biggr)>p_c,
\end{align}
where $p'(\cdot)$ is the parameter appearing in Proposition~\ref{prop:chemical}.

\paragraph{\bf Step~2}
In this step, we tackle the construction of the renormalization procedure.
Let $N$ be a positive integer to be chosen large enough later.
A site $v \in \Z^d$ is said to be \emph{good} if the following conditions (1) and  (2) hold
for all $y \in \frac{1}{M}\Z^d$ with $\|y\|_1=1$:
\begin{enumerate}
	\item $T^*(NL_{My}(v),NL_{My}(v+\xi)) \leq MN\mu(y)(1+\delta)$ for all $\xi \in \mathcal{E}^d$.
	\item $(NL_{My}(v))^*$ is included in $B_1(NL_{My}(v),\sqrt{N})$, and
		$(NL_{My}(v+\xi))^*$ belongs to $B_1(NL_{My}(v+\xi),\sqrt{N})$ for all $\xi \in \mathcal{E}^d$.
\end{enumerate}
Otherwise, $v$ is called \emph{bad}.

\begin{lem}\label{lem:good}
There exists $N \in \N$ such that $(\1{\{ v \text{ is good} \}})_{v \in \Z^d}$ stochastically dominates $\eta_p$.
\end{lem}
\begin{proof}
Since the set $\{ y \in \frac{1}{M}\Z^d: \|y\|_1=1 \}$ is finite,
Lemma~\ref{lem:modify} implies that $P(v \text{ is bad})$ converges to zero as $N \to \infty$.
Moreover, Lemma~\ref{lem:sym} ensures that if $\|v-w\|_1>(2/\Cr{sym1})\mu(\xi_1)(1+\delta)$, then
for all $y \in \frac{1}{M}\Z^d$ with $\|y\|_1=1$,
\begin{align*}
	\| NL_{My}(v)-NL_{My}(w)\|_1>2MN\mu(\xi_1)(1+\delta).
\end{align*}
This means that $(\1{\{ v \text{ is good} \}})_{v \in \Z^d}$ is finitely dependent.
Therefore, the lemma follows from Proposition~\ref{prop:domi}.
\end{proof}


For a given $x \in \Z^d \setminus \{0\}$, we set $x':=x/\|x\|_1$.
Then, there exists $\hat{x} \in \frac{1}{M}\Z^d$ such that $\|\hat{x}\|_1=1$ and $\| x'-\hat{x} \|_1 \leq d/(2M)$.
Note that, by \eqref{eq:tc_bound} and \eqref{eq:choiceM},
\begin{align}\label{eq:mudash}
	|\mu(x')-\mu(\hat{x})|
	\leq \mu(\xi_1) \|x'-\hat{x}\|_1
	\leq \mu(\xi_1) \,\frac{d}{2M}
	\leq \delta
\end{align}
and
\begin{align}\label{eq:dash}
	\| x'-\hat{x} \|_1 \leq \frac{\Cr{sym1} \beta}{8}.
\end{align}
The definition of $L_{M\hat{x}}$ and Lemma~\ref{lem:sym} tell us that for all $1 \leq i \leq d$,
\begin{align*}
	\mu(L_{M\hat{x}}(\xi_i))=M\mu(\hat{x}),\qquad
	\|L_{M\hat{x}}(\xi_i)\|_1=M,
\end{align*}
and for all $y \in \R^d$,
\begin{align*}
	\Cr{sym1}M\|y\|_1 \leq \|L_{M\hat{x}}(y)\|_1 \leq M\|y\|_1.
\end{align*}

Denote by $d^\mathrm{g}(\cdot,\cdot)$ the chemical distance for $(\1{\{ v \text{ is good} \}})_{v \in \Z^d}$.
We now consider the event
\begin{align*}
	G:=\bigl\{ \exists v \in \mathcal{A}(0,\beta \|\bar{x}\|_1),
	\,\exists w \in \mathcal{A}(\bar{x},\beta \|\bar{x}\|_1)
	\text{ suth that } d^\mathrm{g}(v,w)<(1+3\beta)\|\bar{x}\|_1 \bigr\},
\end{align*}
where
\begin{align*}
	\bar{x}:=\biggl\lfloor \frac{\|x\|_1}{MN} \biggr\rfloor \xi_1
\end{align*}
and 
\begin{align*}
	\mathcal{A}(z,r):=\{ y \in \Z^d: r/2 \leq \|y-z\|_1 \leq r \},\qquad z \in \Z^d,\,r>0.
\end{align*}
It is easy to see that on the event $G$, for some $v \in \mathcal{A}(0,\beta \|\bar{x}\|_1)$
and $w \in \mathcal{A}(\bar{x},\beta \|\bar{x}\|_1)$,
\begin{align}\label{eq:gpath}
	T^*(NL_{M\hat{x}}(v),NL_{M\hat{x}}(w))
	<MN\mu(\hat{x})(1+\delta)(1+3\beta)\|\bar{x}\|_1.
\end{align}
Furthermore, Lemma~\ref{lem:good} proves
\begin{align*}
	P(G^\complement)
	&\leq P\biggl(
		\begin{minipage}{6.8truecm}
			$d_{\eta_p}(v,w) \geq (1+3\beta)\|\bar{x}\|_1$ for all\\
			$v \in \mathcal{A}(0,\beta \|\bar{x}\|_1)$ and $w \in \mathcal{A}(\bar{x},\beta \|\bar{x}\|_1)$
		\end{minipage}\biggr)\\
	&\leq 2P\bigl( B_1(0,\beta\|\bar{x}\|_1/2) \cap \mathcal{C}_\infty(\eta_p)=\emptyset \bigr)\\
	&\quad +\sum_{\substack{v \in B_1(0,\beta \|\bar{x}\|_1)\\w \in B_1(\bar{x},\beta \|\bar{x}\|_1)}}
		P\biggl( \frac{1+3\beta}{1+2\beta}\|v-w\|_1 \leq d_{\eta_p}(v,w)<\infty \biggr).
\end{align*}
Thanks to $\beta<1/4$ and \eqref{eq:p}, (1) of Proposition~\ref{prop:GM} and Proposition~\ref{prop:chemical}
imply that for some constants $c$ and $c'$,
\begin{align}\label{eq:Gc}
	P(G^c)
	\leq ce^{-\Cr{GM5}\beta\|x\|_1/(2MN)}
		+c\biggl( \frac{\beta\|x\|_1}{MN} \biggr)^{2d}e^{-c'(1-2\beta)\|x\|_1/(MN)}.
\end{align}

\paragraph{\bf Step~3}
Finally, we complete the proof.
There is no loss of generality in assuming
\begin{align}\label{eq:xchoice}
	\|x\|_1 \geq \frac{4MN}{\beta \Cr{sym1}}.
\end{align}
By the definition of $x'$ and \eqref{eq:direction},
\begin{align}\label{eq:final}
\begin{split}
	&\P(T(0,x) \geq (1+\epsilon)\mu(x))\\
	&= \P(T(0,x) \geq \mu(x')(1+\epsilon)\|x\|_1)\\
	&\leq \P\biggl( T(0,x)
		>\biggl( 1+\frac{3\delta}{2\Cr{APtype1}} \biggr)(1+\delta)^2 \mu(x')\|x\|_1+2\delta\|x\|_1 \biggr).
\end{split}
\end{align}
Let $A$ be the event that $T(0,y)<\delta \|x\|_1$ for all $y \in NL_{M\hat{x}}(\mathcal{A}(0,\beta\|\bar{x}\|_1))+B_1(0,\sqrt{N})$
and $T(z,x)<\delta \|x\|_1$ for all
$z \in [NL_{M\hat{x}}(\mathcal{A}(\bar{x},\beta\|\bar{x}\|_1))+B_1(0,\sqrt{N})] \cap \mathcal{I}$.
By \eqref{eq:mudash} and \eqref{eq:gpath}, on the event $G \cap A \cap \{ 0 \in \mathcal{I} \}$,
there exist $v \in \mathcal{A}(0,\beta\|\bar{x}\|_1)$ and $w \in \mathcal{A}(\bar{x},\beta\|\bar{x}\|_1)$ such that
\begin{align*}
	T(0,x)
	&\leq T(0,(NL_{M\hat{x}}(v))^*)
		+T^*(NL_{M\hat{x}}(v),NL_{M\hat{x}}(w))
		+T((NL_{M\hat{x}}(w))^*,x)\\
	&\leq MN \mu(\hat{x})(1+\delta)(1+3\beta)\|\bar{x}\|_1+2\delta\|x\|_1\\
	&\leq \biggl( 1+\frac{3\delta}{2\Cr{APtype1}} \biggr)(1+\delta)^2 \mu(x')\|x\|_1+2\delta\|x\|_1.
\end{align*}
This means that the most right side of \eqref{eq:final} is bounded from above by $\P(G^\complement)+\P(A^\complement)$.
Due to \eqref{eq:Gc}, our task is to estimate $\P(A^\complement)$.
We use Lemma~\ref{lem:sym} and \eqref{eq:xchoice} to obtain that for
$y \in NL_{M\hat{x}}(\mathcal{A}(0,\beta\|\bar{x}\|_1))+B_1(0,\sqrt{N})$,
\begin{align*}
	\|y\|_1
	\leq MN \beta \|\bar{x}\|_1+\sqrt{N}
	\leq \beta \|x\|_1+\sqrt{N}
	\leq \frac{17}{16}\beta \|x\|_1
	\leq \frac{\delta}{\Cr{APtype1}}\|x\|_1
\end{align*}
and
\begin{align*}
	\|y\|_1
	&\geq MN \Cr{sym1} \,\frac{\beta \|\bar{x}\|_1}{2}-\sqrt{N}\\
	&\geq \Cr{sym1}\frac{\beta \|x\|_1}{2}-MN\frac{\Cr{sym1}\beta}{2}-\sqrt{N}
		\geq \frac{13}{32} \Cr{sym1}\beta\|x\|_1.
\end{align*}
Similarly, for $z \in NL_{M\hat{x}}(\mathcal{A}(\bar{x},\beta\|\bar{x}\|_1))+B_1(0,\sqrt{N})$,
\begin{align*}
	\frac{13}{32} \Cr{sym1}\beta\|x\|_1 \leq \|z-NL_{M\hat{x}}(\bar{x})\|_1 \leq \frac{17}{16} \beta \|x\|_1.
\end{align*}
In addition, by \eqref{eq:dash}, one has that for $z \in NL_{M\hat{x}}(\mathcal{A}(\bar{x},\beta\|\bar{x}\|_1))+B_1(0,\sqrt{N})$,
\begin{align*}
	\|x-z\|_1
	&\leq \|x-NL_{M\hat{x}}(\bar{x})\|_1+\|NL_{M\hat{x}}(\bar{x})-z\|_1\\
	&\leq \frac{3}{8}\Cr{sym1}\beta\|x\|_1+\frac{17}{16}\beta\|x\|_1
		\leq 2\beta \|x\|_1=\frac{\delta}{\Cr{APtype1}}\|x\|_1
\end{align*}
and
\begin{align*}
	\|x-z\|_1
	&\geq \|z-NL_{M\hat{x}}(\bar{x})\|_1-\|NL_{M\hat{x}}(\bar{x})-x\|_1\\
	&\geq \frac{13}{32}\Cr{sym1}\beta\|x\|_1-\frac{3}{8}\Cr{sym1}\beta\|x\|_1
		= \frac{\Cr{sym1}}{32}\beta\|x\|_1.
\end{align*}
Therefore,
\begin{align*}
	\P(A^\complement)
	&\leq \sum_{(13/32)\Cr{sym1}\beta\|x\|_1 \leq \|y\|_1 \leq (17/16) \beta \|x\|_1}
		\P(T(0,y)>\Cr{APtype1} \|y\|_1)\\
	&\quad +\sum_{(13/32)\Cr{sym1}\beta\|x\|_1 \leq \|z-NL_{M\hat{x}}(\bar{x})\|_1 \leq (17/16)\beta \|x\|_1}
		\P(T(0,x-z)>\Cr{APtype1}\|x-z\|_1),
\end{align*}
and this combined with Proposition~\ref{prop:APtype} completes the proof.
\end{proof}

\section{Left tail large deviation and concentration bounds}\label{sect:left}
The aim of this section is to prove Theorems~\ref{thm:left} and \ref{thm:tscon}.
Let us begin with the proof of Theorem~\ref{thm:left}.

\begin{proof}[\bf Proof of Theorem~\ref{thm:left}]
Let $v(x)$ denote a site of $\mathcal{I}$ satisfying
\begin{align*}
	T(0^*,x)=T(0^*,v(x))+\tau(v(x),x).
\end{align*}
We first prove that for all $\epsilon>0$ there exist constants $\Cl{bypath1}$ and $\Cl{bypath2}$ such that
\begin{align}\label{eq:bypath}
	P(T(v(x),x^*) \geq \epsilon\|x\|_1) \leq \Cr{bypath1} \exp\{ -\Cr{bypath2}\|x\|_1^{\Ar{APtype4} \wedge \Ar{jump3}} \}.
\end{align}
Corollary~\ref{cor:jump} tells us that there exist constants $c$ and $c'$ such that
\begin{align*}
	P \biggl(
	\begin{minipage}{8.8truecm}
		$\exists v_1,v_2 \in \mathcal{I}$ with $\|v_1-v_2\|_1 \geq\epsilon(2\Cr{APtype1})^{-1}\|x\|_1$ such\\
		that $T(0^*,x)=T(0^*,v_1)+\tau(v_1,v_2)+T(v_2,x)$
	\end{minipage} \biggr)
	\leq ce^{-c'\|x\|_1^\Ar{jump3}}.
\end{align*}
It follows that
\begin{align*}
	&P(T(v(x),x^*) \geq \epsilon\|x\|_1)\\
	&\leq ce^{-c'\|x\|_1^\Ar{jump3}}
		+P\biggl( \|x-x^*\|_1 \geq \frac{\epsilon\|x\|_1}{2\Cr{APtype1}} \biggr)\\
	&\quad +P \biggl( T(v(x),x^*) \geq \epsilon\|x\|_1,\, \|v(x)-x\|_1<\frac{\epsilon\|x\|_1}{2\Cr{APtype1}},\,
		\|x-x^*\|_1<\frac{\epsilon\|x\|_1}{2\Cr{APtype1}} \biggr).
\end{align*}
Since the second term has the desired form, our task is to bound the last probability.
To this end, we use Proposition~\ref{prop:APtype} to obtain that for some constants $C$ and $C'$,
\begin{align*}
	&P \biggl( T(v(x),x^*) \geq \epsilon\|x\|_1,\, \|v(x)-x\|_1<\frac{\epsilon\|x\|_1}{2\Cr{APtype1}},\,
		\|x-x^*\|_1<\frac{\epsilon\|x\|_1}{2\Cr{APtype1}} \biggr)\\
	&\leq \sum_{\substack{y \in \Z^d\\ \|y-x\|_1<(2\Cr{APtype1})^{-1}\epsilon\|x\|_1}}
		\sum_{\substack{z \in \Z^d\\ \|x-z\|_1<(2\Cr{APtype1})^{-1}\epsilon\|x\|_1}}
		\P(T(0,z-y) \geq \epsilon\|x\|_1)\\
	&\leq Ce^{-C'\|x\|_1^\Ar{APtype4}}.
\end{align*}
Hence, \eqref{eq:bypath} follows.

Taking $t=\epsilon \sqrt{\|x\|_1}$, one has by \eqref{eq:tc_bound} and \eqref{eq:modify},
\begin{align*}
	&\P(T(0,x) \leq (1-\epsilon)\mu(x))\\
	&\leq P( 0 \in \mathcal{I})^{-1}P\Bigl( T(0^*,x)-E[T^*(0,x)] \leq -t\sqrt{\|x\|_1} \Bigr).
\end{align*}
The last probability is bounded from above by
\begin{align*}
	P\biggl( T(0^*,x)-T^*(0,x) \leq -\frac{t}{2}\sqrt{\|x\|_1} \biggr)
	+P\biggl( T^*(0,x)-E[T^*(0,x)] \leq -\frac{t}{2}\sqrt{\|x\|_1} \biggr).
\end{align*}
Note
\begin{align*}
	T^*(0,x)
	\leq T(0^*,v(x))+T(v(x),x^*)+\tau(v(x),x)
	\leq T(0^*,x)+T(v(x),x^*),
\end{align*}
and \eqref{eq:bypath} implies
\begin{align*}
	P\biggl( T(0^*,x)-T^*(0,x) \leq -\frac{t}{2}\sqrt{\|x\|_1} \biggr)
	&\leq P\biggl( T(v(x),x^*) \geq \frac{\epsilon}{2}\|x\|_1 \biggr)\\
	&\leq \Cr{bypath1} \exp\{ -\Cr{bypath2}\|x\|_1^{\Ar{APtype4} \wedge \Ar{jump3}} \}.
\end{align*}
Furthermore, once Theorem~\ref{thm:tscon} is proved, one has
\begin{align*}
	P\biggl( T^*(0,x)-E[T^*(0,x)] \leq -\frac{t}{2}\sqrt{\|x\|_1} \biggr)
	\leq \Cr{scon2}\exp\Bigl\{ -\Cr{scon3} \Bigl( \frac{\epsilon}{2} \sqrt{\|x\|_1} \Bigr)^\Ar{scon4} \Bigr\},
\end{align*}
and therefore the theorem follows.
\end{proof}

\begin{proof}[\bf Proof of Theorem~\ref{thm:tscon}]
For each $t>0$, define the two-point function $\sigma_t(\cdot,\cdot)$ as follows.
Take $K>d(\Cr{APtype1}+\gamma+1)$.
First, if $\|x-y\|_\infty \leq t$ and $\tau(x,y)>4Kt$, then set $\sigma_t(x,y):=4Kt$.
Next, if $\|x-y\|_\infty>t$ then set $\sigma_t(x,y):=4K\|x-y\|_\infty$.
Otherwise, set $\sigma_t(x,y):=\tau(x,y)$.
By definition, for any $x,y \in \Z^d$,
\begin{align}\label{eq:ttbd}
	\|x-y\|_1 \leq \sigma_t(x,y) \leq 4K(t \vee \|x-y\|_\infty).
\end{align}
We write $T_t(x,y)$ for the first passage time from $x$ to $y$ corresponding to $\sigma_t(\cdot,\cdot)$, i.e.,
\begin{align*}
	T_t(x,y):=\inf\Biggl\{ \sum_{i=0}^{m-1}\sigma_t(x_i,x_{i+1}):
	\begin{minipage}{4truecm}
		$m \geq 1$,\\
		$x=x_0,x_1,\dots,x_m=y$
	\end{minipage}
	\Biggr\}.
\end{align*}

\begin{prop}\label{prop:tsdif}
There exist constants $\Cl{tsdif1}$, $\Cl{tsdif2}$ and $\Al{tsdif3}$ such that for all $x \in \Z^d \setminus \{0\}$
and $0 \leq t \leq \|x\|_1$,
\begin{align*}
	&\max\bigl\{ P(T_t(0^*,x^*) \not= T^*(0,x)),E\bigl[(T_t(0^*,x^*)-T^*(0,x))^2 \bigr]^{1/2} \bigr\}\\
	&\leq \Cr{tsdif1}\|x\|_1^{4d}e^{-\Cr{tsdif2}t^\Ar{tsdif3}}.
\end{align*}
\end{prop}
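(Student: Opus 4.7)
I would prove Proposition~\ref{prop:tsdif} by first bounding $P(T_t(0^*, x^*) \neq T^*(0, x))$ and then deducing the $L^2$ inequality via Cauchy--Schwarz.

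For the probability bound, let $\pi^* = (0^* = v_0, v_1, \dots, v_m = x^*)$ denote an optimal path realising $T^*(0, x) = T(0^*, x^*)$, and introduce the good event $G$ consisting of: (i) $\|v_i - v_{i+1}\|_1 \leq t$ for every edge of $\pi^*$; (ii) $\tau(v_i, v_{i+1}) \leq 4Kt$ for every edge of $\pi^*$; and (iii) no pair $(y, z)$ with $y \in \mathcal{I}$ and both $y, z \in B_1(0, C\|x\|_1)$ satisfies $\tau(y, z) > 4K(t \vee \|y - z\|_\infty)$. On $G$, conditions (i) and (ii) place every edge of $\pi^*$ in the third case of the definition of $\sigma_t$, so that $\sigma_t$ coincides with $\tau$ along $\pi^*$ and hence $T_t \leq T^*$. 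Condition (iii), together with a further high-probability event confining $T_t$-optimal paths to $B_1(0, C\|x\|_1)$ (ensured via upper tail bounds on $T_t$ and $T^*$ coming from Proposition~\ref{prop:APtype}), guarantees that $\sigma_t(y, z) \geq \tau(y, z)$ on every edge of any candidate $T_t$-optimal path, so that $T^* \leq T_t$ as well.

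To bound $P(G^c)$: the failure of (i) is controlled by Corollary~\ref{cor:jump} applied to $T(0^*, x^*)$, after summing over the possible values of $(0^*, x^*) \in \mathcal{I} \times \mathcal{I}$; the factors arising from the exponential tails of $\|0^*\|_1$ and $\|x^* - x\|_1$ are absorbed into the polynomial prefactor. The failure of (ii) implies $T^*(0, x) \geq 4Kt$, whose probability is bounded by Proposition~\ref{prop:APtype}. The failure of (iii) is handled by a union bound over the $O(\|x\|_1^{2d})$ candidate pairs in $B_1(0, C\|x\|_1)$, combined with tail bounds on $\tau(y, z)$ derived from Proposition~\ref{prop:APtype} and standard large deviation estimates for the simple random walk. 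Together these contributions yield a bound of the form $P(G^c) \leq C\|x\|_1^{4d} e^{-c t^\alpha}$.

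For the $L^2$ bound, Cauchy--Schwarz gives
\begin{equation*}
E\bigl[(T_t(0^*, x^*) - T^*(0, x))^2\bigr] \leq \bigl(E\bigl[(T_t - T^*)^4\bigr]\bigr)^{1/2}\, P(T_t \neq T^*)^{1/2}.
\end{equation*}
Using the greedy nearest-neighbour path from $0^*$ to $x^*$, each of whose edges satisfies $\|\cdot\|_\infty = 1 \leq t$ and hence has $\sigma_t$-cost at most $4Kt$, I obtain the deterministic bound $T_t(0^*, x^*) \leq 4Kt\, \|x^* - 0^*\|_1$; combined with the exponential tails of $\|0^*\|_1$ and $\|x^* - x\|_1$ and with the polynomial moments of $T^*$ coming from Proposition~\ref{prop:APtype}, this yields $E[(T_t - T^*)^4]^{1/2}$ polynomial in $\|x\|_1$, and multiplying with the probability bound produces the required $L^2$ estimate.

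The principal obstacle is the reverse inequality $T^* \leq T_t$ on $G$. Unlike $T_t \leq T^*$, which can be read off from $\pi^*$ alone, ruling out $\sigma_t$-shortcuts is a global statement about the environment inside $B_1(0, C\|x\|_1)$, and controlling the tail of $\tau(y, z)$ uniformly is delicate---especially in dimensions $d \geq 3$ where simple random walks are transient and $\tau$ may be infinite with positive probability. The argument must therefore balance a careful union bound over candidate shortcut edges against the sub-exponential decay supplied by Proposition~\ref{prop:APtype}, producing precisely the polynomial prefactor $\|x\|_1^{4d}$ in the statement.
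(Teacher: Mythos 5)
Your skeleton --- first bound $P(T_t(0^*,x^*)\neq T^*(0,x))$, then deduce the $L^2$ bound by Cauchy--Schwarz together with polynomial moment bounds on $T_t$ and $T^*$ --- matches the paper's, and the direction $T_t\le T^*$ (no long edges on the $T$-optimal path, hence $\sigma_t\le\tau$ edgewise) is essentially right. The genuine gap is in the hard direction $T^*\le T_t$. Your event (iii) requires $\tau(y,z)\le 4K(t\vee\|y-z\|_\infty)$ simultaneously for all pairs $y\in\mathcal I$, $z$ in a box of radius $C\|x\|_1$. That event is not of high probability: $\tau(y,z)$ is the hitting time of $z$ by the $\omega(y)$ walkers started at $y$ alone, with no relaying, and for each fixed pair it equals $+\infty$ with probability bounded away from zero (always when $\omega(y)=0$ for the pairs with $y\notin\mathcal I$ that the path may still use, and even for $y\in\mathcal I$ in $d\ge 3$ by transience of the simple random walk). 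So $P(G^{\complement})\to 1$ rather than $0$, and no union bound against Proposition~\ref{prop:APtype} --- which controls $T$, not $\tau$ --- can repair it. Worse, an edgewise comparison $\tau\le\sigma_t$ along the $T_t$-optimal path cannot work even in principle: that path may pass through sites $y\notin\mathcal I$, where $\tau(y,\cdot)=\infty$ while $\sigma_t(y,\cdot)$ is finite by construction. You correctly identify this as ``the principal obstacle,'' but the mechanism you propose does not overcome it.

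The paper's resolution is to abandon $\tau$ entirely in this direction and work with the full relay times $T(y,z)$, which do have the required uniform tails (its events $\Gamma_3$, $\Gamma_4$: $T(y,z)\le 2Kt$ when $\|y-z\|_\infty\le 2t$ and $T(y,z)\le K\|y-z\|_\infty$ when $\|y-z\|_\infty\ge t/2$, both from Proposition~\ref{prop:APtype}), and to compare the cumulative quantities $T(0^*,x_i)$ and $T_t(0^*,x_i)$ inductively along the $T_t$-optimal path: at the largest index $i_0$ where they agree, the no-long-jump event supplies a relay site $x'_{i_0}\in\mathcal I$ within $\ell^1$-distance $t/2$ of $x_{i_0}$, and in each of the three cases of the definition of $\sigma_t$ one bounds $T(0^*,x_{i_0+1})\le T(0^*,x'_{i_0})+T(x'_{i_0},x_{i_0+1})\le T(0^*,x_{i_0})+\sigma_t(x_{i_0},x_{i_0+1})$, contradicting maximality of $i_0$. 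A secondary error in your write-up: you bound the failure of (ii) by $P(T^*(0,x)\ge 4Kt)$ via Proposition~\ref{prop:APtype}, but that proposition applies only for $t\ge \Cr{APtype1}\|x\|_1$, and for $t\ll\|x\|_1$ this probability is near one since $T^*(0,x)\gtrsim\|x\|_1$; fortunately (ii) is not needed for $T_t\le T^*$, as $\sigma_t\le\tau$ already holds whenever $\|y-z\|_\infty\le t$.
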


\begin{prop}\label{prop:ttcon}
For all $\gamma>0$ there exists a constant $\Cl{ttcon1}$ such that for all $x \in \Z^d \setminus \{0\}$ and
$0 \leq t \leq \gamma\sqrt{\|x\|_1}$,
\begin{align*}
	P\Bigl( |T_t(0,x)-E[T_t(0,x)]| \geq t\sqrt{\|x\|_1} \Bigr) \leq 2e^{-\Cr{ttcon1}t}.
\end{align*}
\end{prop}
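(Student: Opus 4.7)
The plan is to apply an exponential version of the Efron--Stein inequality (of Boucheron--Lugosi--Massart type, as in \cite[\S3]{GarMar10}) to $Z := T_t(0,x)$, viewed as a measurable function of the independent family $\omega = (\omega(y))_{y}$ together with the random walks $S = (S_\cdot(y,\ell))_{y,\ell}$. The truncation built into $\sigma_t$ is exactly what allows one to promote the usual variance-type control to the exponential tail $e^{-\Cr{ttcon1}t}$: each coordinate can shift $Z$ by at most $O(t)$, which combined with an $O(t\|x\|_1)$ bound on the conditional sum of squared differences will yield a sub-gamma concentration at the right scale.

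For the bounded-differences step, I fix a coordinate $i$ attached to some site $y \in \Z^d$---either $\omega(y)$ itself, or $S_\cdot(y,\ell)$ for some $\ell$, noting that resampling $S_\cdot(y,\ell)$ with $\ell > \omega(y)$ has no effect and may be ignored. Let $Z^{(i)}$ denote $Z$ with only the $i$-th coordinate replaced by an independent copy, and pick a self-avoiding optimal path $\pi^{(i)}$ realizing $Z^{(i)}$. Then $Z$ is at most the cost of $\pi^{(i)}$ under the original randomness, so $(Z - Z^{(i)})_+$ is bounded by the total change of cost on bonds of $\pi^{(i)}$ having $y$ as an endpoint; self-avoidance allows at most two such bonds, and each either has $\|y-z\|_\infty \leq t$ (truncated cost $\leq 4Kt$) or $\|y-z\|_\infty > t$ (purely geometric cost $4K\|y-z\|_\infty$, unaffected by the resampling). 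This gives the uniform increment bound $|Z - Z^{(i)}| \leq M := 8Kt$ and
\[
    V^+ := \sum_i E\bigl[(Z - Z^{(i)})_+^2 \,\big|\, \omega, S\bigr] \;\leq\; (8Kt)^2 \sum_{y \in \pi}(1+\omega(y)),
\]
where the factor $1+\omega(y)$ counts the environment coordinate together with the $\omega(y)$ active walks based at $y$.

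Plugging $(E[V^+], M)$ into the sub-gamma tail bound $P(|Z - E[Z]| \geq s) \leq 2\exp(-s^2/(2E[V^+] + 2Ms))$ produced by the entropy method, and choosing $s = t\sqrt{\|x\|_1}$, I obtain the announced exponent $\geq \Cr{ttcon1}t$ provided that
\[
    E\!\left[\sum_{y \in \pi}(1+\omega(y))\right] \;\leq\; C\,\frac{\|x\|_1}{t},
    \quad\text{equivalently,}\quad E[V^+] \leq Ct\|x\|_1;
\]
for $t \leq \gamma\sqrt{\|x\|_1}$ the exponent then becomes $t^2\|x\|_1/(2Ct\|x\|_1 + 16Kt^2\sqrt{\|x\|_1}) \geq t/(2C + 16K\gamma)$, as required.

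The hard part is exactly the bound $E[\sum_{y \in \pi}(1+\omega(y))] \leq C\|x\|_1/t$. A naive estimate via $|\pi| \leq T_t(0,x) \leq 4Kt\|x\|_1$ only yields $O(t\|x\|_1)$, which degrades the exponent from $O(t)$ to $O(1/t)$ and makes the argument collapse. The correct bound should exploit that a geodesic of the truncated metric prefers "long" hops of $\ell^\infty$-length comparable to $t$ (whose cost is still at most $4Kt$): a reference path built entirely from such hops has only $\|x\|_1/t$ edges and total cost $O(\|x\|_1)$, so the optimal path ought to be of comparable length. Quantitatively, I would combine Proposition~\ref{prop:APtype} with the assumption $E[\omega(0)]<\infty$ to control both the expected number of vertices on $\pi$ and the frog-mass along it, absorbing the rare configurations in which the geodesic visits anomalously many or heavily populated sites into the constants. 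A minor technical subtlety worth flagging is that $Z$ formally depends on infinitely many inactive walks $S_\cdot(y,\ell)$ with $\ell > \omega(y)$; these contribute zero to both $M$ and $V^+$, but care is needed when the Efron--Stein formalism is set up on the infinite product space.
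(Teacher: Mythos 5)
Your overall strategy---an exponential Efron--Stein/entropy-method bound for $Z=T_t(0,x)$, with the truncation in $\sigma_t$ supplying $O(t)$ increments and a variance proxy of order $t\|x\|_1$---is exactly the paper's strategy. The gap is in the choice of coordinates, and it sits precisely at the step you flag as ``the hard part'': the bound $E\bigl[\sum_{y\in\pi}(1+\omega(y))\bigr]\leq C\|x\|_1/t$ is not just unproven, it is false in general. A truncated geodesic has no reason to prefer hops of $\ell^\infty$-length comparable to $t$: a nearest-neighbor hop costs $\sigma_t(x_i,x_{i+1})=\tau(x_i,x_{i+1})$, which can equal $1$, so in a dense, fast configuration the optimizer is essentially a nearest-neighbor path with $\Theta(\|x\|_1)$ vertices. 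The only a priori control is $\#\pi\leq T_t(0,x)\leq 4K(t\vee\|x\|_\infty)=O(\|x\|_1)$ for $t\leq\gamma\sqrt{\|x\|_1}$, which gives $V^+=O(t^2\|x\|_1)$ and hence an exponent of order $1$ rather than $t$; the frog-count factor $\sum_{y\in\pi}\omega(y)$ is an additional uncontrolled term of at least the same order. Your ``reference path of long hops'' heuristic bounds the \emph{cost} of the geodesic, not its cardinality, and Proposition~\ref{prop:APtype} gives no handle on the number of sites a geodesic visits.

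The missing idea is a coarse-graining of the randomness at scale $t$: the paper tiles $\Z^d$ by boxes $\Lambda_q$ of side $t$ and takes \emph{one} Efron--Stein coordinate per box, $U_q=((\omega(z))_{z\in\Lambda_q},(S_\cdot(z,\ell))_{z\in\Lambda_q,\ell\in\N})$. Resampling a whole box still moves $Z$ by at most $8Kt$, because the truncated costs let one reroute the original geodesic around the box (Lemma~\ref{lem:perturb}); and the number of boxes met by the geodesic is bounded \emph{deterministically} by $CK(1\vee\|x\|_\infty/t)$ (Lemma~\ref{lem:hit}), since each departure from the $3t$-neighborhood of the current box costs $\ell^1$-distance at least $t$ while the total cost is at most $4K(t\vee\|x\|_\infty)$. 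This gives $V_-\leq Ct\|x\|_\infty$ with no expectation and no dependence on $\omega$, which is exactly the bound your site-by-site decomposition cannot deliver. Two secondary points: the paper controls the lower tail via $(Z-Z'_q)_-$ and the upper tail via the weighted functional $W$ with $\psi_q,\phi_q,g_q$, because these are estimated along the \emph{original} geodesic, whereas your $(Z-Z^{(i)})_+$ forces you to work with the resampled geodesic inside the conditional expectation defining $V^+$; and the constant in the exponent must be checked to survive the regime $t\leq\gamma\sqrt{\|x\|_1}$, which is where the hypothesis $K>d(\Cr{APtype1}+\gamma+1)$ and the choice of $\lambda\asymp\sqrt{\|x\|_1}/(t\vee\|x\|_\infty)$ enter.
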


Let us postpone the proofs of these propositions to the end of this section, and continue the proof of Theorem~\ref{thm:tscon}.
To this end, without loss of generality we can assume $\|x\|_1 \geq (32KE[1 \vee \|0^*\|_\infty])^2$.
Take $c \geq 1$ large enough to have for all $t \geq c(1+\log\|x\|_1)^{1/\Ar{tsdif3}}$,
\begin{align*}
	\Cr{tsdif1}\|x\|_1^{4d}e^{-\Cr{tsdif2}t^\Ar{tsdif3}}
	\leq \Cr{tsdif1}e^{-\Cr{tsdif2}t^\Ar{tsdif3}/2}
	\leq \frac{t}{4}.	
\end{align*}
From \eqref{eq:ttbd} and Proposition~\ref{prop:tsdif}, we have
\begin{align*}
	&|E[T^*(0,x)]-E[T_t(0,x)]|\\
	&\leq E[|T^*(0,x)-T_t(0^*,x^*)|]+2E[T_t(0,0^*) \vee T_t(0^*,0)]\\
	&\leq \Cr{tsdif1}\|x\|_1^{4d}e^{-\Cr{tsdif2}t^\Ar{tsdif3}}+8KE[t \vee \|0^*\|_\infty].
\end{align*}
Hence, for all $t \geq c(1+\log\|x\|_1)^{1/\Ar{tsdif3}}$,
\begin{align*}
	|E[T^*(0,x)]-E[T_t(0,x)]| \leq \frac{t}{2}\sqrt{\|x\|_1}.
\end{align*}
This together with Proposition~\ref{prop:tsdif} leads to
\begin{align*}
	&P\Bigl( |T^*(0,x)-E[T^*(0,x)]| \geq t\sqrt{\|x\|_1} \Bigr)\\
	&\leq \Cr{tsdif1}e^{-\Cr{tsdif2}t^\Ar{tsdif3}/2}
		+P\biggl( |T_t(0^*,x^*)-E[T_t(0,x)]| \geq \frac{t}{2}\sqrt{\|x\|_1} \biggr).
\end{align*}
For the second term of the right side,
\begin{align*}
	&|T_t(0^*,x^*)-E[T_t(0,x)]|\\
	&\leq |T_t(0^*,x^*)-T_t(0,x)|+|T_t(0,x)-E[T_t(0,x)]|\\
	&\leq T_t(0,0^*) \vee T_t(0^*,0)+T_t(x,x^*) \vee T_t(x^*,x)+|T_t(0,x)-E[T_t(0,x)]|.
\end{align*}
We use \eqref{eq:ttbd} again to obtain that
\begin{align*}
	&P\biggl( |T_t(0^*,x^*)-E[T_t(0,x)]| \geq \frac{t}{2}\sqrt{\|x\|_1} \biggr)\\
	&\leq 2P\biggl( 4K(t \vee \|0^*\|_\infty) \geq \frac{t}{6}\sqrt{\|x\|_1} \biggr)
		+P\biggl( |T_t(0,x)-E[T_t(0,x)]| \geq \frac{t}{6}\sqrt{\|x\|_1} \biggr),
\end{align*}
and the theorem is a consequence of Proposition~\ref{prop:ttcon}.
\end{proof}

In the rest of this section, we shall prove Propositions~\ref{prop:tsdif} and \ref{prop:ttcon}.

\begin{proof}[\bf Proof of Proposition~\ref{prop:tsdif}]
Let $0 \leq t \leq \|x\|_1$.
We first estimate $P(T_t(0^*,x^*) \not= T^*(0,x))$.
To this end, consider the following events $\Gamma_j$, $1 \leq j \leq 5$:
\begin{align*}
	&\Gamma_1:=\{ \|0^*\|_1 \leq t/8,\,\|x-x^*\|_1 \leq t/8 \},\\
	&\Gamma_2:=\bigcap_{\substack{y,z \in B_1(0,7K\|x\|_1)}}
		\biggl\{
		y \in \mathcal{I} \Longrightarrow
		\begin{minipage}{7truecm}
			$T(y,z) \not= T(y,v_1)+\tau(v_1,v_2)+T(v_2,z)$\\
			for all $v_1,v_2 \in \mathcal{I}$ with $\|v_1-v_2\|_1 \geq t/2$
		\end{minipage} \biggr\},\\
	&\Gamma_3:=\bigcap_{\substack{y,z \in B_1(0,7K\|x\|_1)\\ \|y-z\|_\infty \leq 2t}}
		\{ y \in \mathcal{I} \Longrightarrow T(y,z) \leq 2Kt \},\\
	&\Gamma_4:=\bigcap_{\substack{y,z \in B_1(0,7K\|x\|_1)\\ \|y-z\|_\infty \geq t/2}}
		\{ y \in \mathcal{I} \Longrightarrow T(y,z) \leq K\|y-z\|_\infty \},\\
	&\Gamma_5:=\bigcap_{y,z \in B_1(0,7K\|x\|_1)} \{ y \in \mathcal{I} \Longrightarrow T(y,z) \geq T_t(y,z) \}.
\end{align*}

We shall observe that $T_t(0^*,x^*)=T^*(0,x)$ holds on the event $\bigcap_{j=1}^5 \Gamma_j$.
Denote by $(x_i)_{i=0}^m$ a finite sequence of $\Z^d$ satisfying that $x_0=0^*$, $x_m=x^*$ and
$T_t(0^*,x^*)=\sum_{i=0}^{m-1}\sigma_t(x_i,x_{i+1})$.
Moreover, the index $i_0$ is defined by
\begin{align*}
	i_0:=\max\{0 \leq i \leq m:T(0^*,x_i)=T_t(0^*,x_i) \}.
\end{align*}
On the event $\Gamma_1$, we have $\|0^*\|_1 \leq K\|x\|_1$ and it holds by \eqref{eq:ttbd} that
\begin{align*}
	T_t(0^*,x^*)
	\leq 4K(t \vee \|0^*-x^*\|_\infty)
	\leq 4K \biggl\{ t \vee \biggl( \frac{t}{4}+\|x\|_\infty \biggr) \biggr\}
	\leq 5K\|x\|_1.
\end{align*}
This proves that $x_i$'s are included in $B_1(0,6K\|x\|_1)$ on the event $\Gamma_1$.
Let $x'_{i_0}$ denote a site of $\mathcal{I}$ satisfying $T(0^*,x_{i_0})=T(0^*,x'_{i_0})+\tau(x'_{i_0},x_{i_0})$.
Note that $\| x'_{i_0}-x_{i_0}\|_1 \leq t/2$ and $\|x'_{i_0}\|_1 \leq 7K\|x\|_1$ on the event $\Gamma_1 \cap \Gamma_2$.
Assume $i_0<m$.
Then, on the event $\Gamma_1 \cap \Gamma_5$,
\begin{align*}
	T(0^*,x_{i_0+1})
	&>T_t(0^*,x_{i_0+1})=T_t(0^*,x_{i_0})+\sigma_t(x_{i_0},x_{i_0+1})\\
	&=T(0^*,x_{i_0})+\sigma_t(x_{i_0},x_{i_0+1}).
\end{align*}
We now consider the following three cases:
\begin{enumerate}
	\item $\|x_{i_0}-x_{i_0+1}\|_\infty \leq t$ and $\tau(x_{i_0},x_{i_0+1})>4Kt$,
	\item $\|x_{i_0}-x_{i_0+1}\|_\infty>t$,
	\item $\|x_{i_0}-x_{i_0+1}\|_\infty \leq t$ and $\tau(x_{i_0},x_{i_0+1}) \leq 4Kt$.
\end{enumerate}

\paragraph{\bf Case~(1)}
On the event $\Gamma_1 \cap \Gamma_2$,
\begin{align*}
	\|x'_{i_0}-x_{i_0+1}\|_\infty
	\leq \| x'_{i_0}-x_{i_0}\|_\infty+\|x_{i_0}-x_{i_0+1}\|_\infty
	\leq \frac{t}{2}+t \leq 2t.
\end{align*}
Therefore, on the event $\Gamma_1 \cap \Gamma_2 \cap \Gamma_3 \cap \Gamma_5$,
\begin{align*}
	T(0^*,x_{i_0+1})
	&>T(0^*,x_{i_0})+\sigma_t(x_{i_0},x_{i_0+1})=T(0^*,x_{i_0})+4Kt\\
	&\geq T(0^*,x'_{i_0})+T(x'_{i_0},x_{i_0+1}) \geq T(0^*,x_{i_0+1}).
\end{align*}
This is a contradiction.

\paragraph{\bf Case~(2)}
On the event $\Gamma_1 \cap \Gamma_2$,
\begin{align*}
	\|x'_{i_0}-x_{i_0+1}\|_\infty \geq \|x_{i_0+1}-x_{i_0}\|_\infty-\|x_{i_0}-x'_{i_0}\|_\infty
	\geq \frac{t}{2},
\end{align*}
and on the event $\Gamma_1 \cap \Gamma_2 \cap \Gamma_5$,
\begin{align*}
	T(0^*,x_{i_0+1})
	&> T(0^*,x_{i_0})+\sigma_t(x_{i_0},x_{i_0+1})=T(0^*,x_{i_0})+4K\|x_{i_0}-x_{i_0+1}\|_\infty\\
	&\geq T(0^*,x'_{i_0})+2Kt+2K\|x_{i_0}-x_{i_0+1}\|_\infty.
\end{align*}
It follows that on the event $\Gamma_1 \cap \Gamma_2 \cap \Gamma_4 \cap \Gamma_5$,
\begin{align*}
	T(0^*,x_{i_0+1})
	&> T(0^*,x'_{i_0})+2K\|x'_{i_0}-x_{i_0}\|_\infty+2K\|x_{i_0}-x_{i_0+1}\|_\infty\\
	&\geq T(0^*,x'_{i_0})+2K\|x'_{i_0}-x_{i_0+1}\|_\infty\\
	&\geq T(0^*,x'_{i_0})+T(x'_{i_0},x_{i_0+1}) \geq T(0^*,x_{i_0+1}),
\end{align*}
and this leads to another contradiction.

\paragraph{\bf Case~(3)}
Since $\sigma_t(x_{i_0},x_{i_0+1})=\tau(x_{i_0},x_{i_0+1})$, on the event $\Gamma_1 \cap \Gamma_5$,
\begin{align*}
	T(0^*,x_{i_0+1})
	&> T(0^*,x_{i_0})+\sigma_t(x_{i_0},x_{i_0+1})\\
	&= T(0^*,x_{i_0})+\tau(x_{i_0},x_{i_0+1}) \geq T(0^*,x_{i_0+1}),
\end{align*}
which is also a contradiction.

With these observations, on the event $\bigcap_{j=1}^5 \Gamma_j$,
$i_0=m$ must hold and $T_t(0^*,x^*)=T^*(0,x)$ is valid.
It remains to estimate the probability of $\bigcup_{j=1}^5 \Gamma_j^\complement$.
Obviously, $P(\Gamma_1^\complement)$ is exponentially small in $t$.
The following bound is an immediate consequence of Proposition~\ref{prop:APtype} and Corollary~\ref{cor:jump}:
For some constants $c$ and $c'$,
\begin{align*}
	P(\Gamma_2^\complement)+P(\Gamma_3^\complement)+P(\Gamma_4^\complement)
	\leq c\|x\|_1^{4d}\exp\{-c't^{\Ar{APtype4} \wedge \Ar{jump3}}\}.
\end{align*}
To estimate $P(\Gamma_5^\complement)$, let us introduce the event $\Gamma_6(w)$ that
$T(0,w) \not= T(0,v_1)+\tau(v_1,v_2)+T(v_2,w)$ for all $v_1,v_2 \in \mathcal{I}$ with $\|v_1-v_2\|_\infty \geq t$.
Since $T(0,w) \geq T_t(0,w)$ on the event $\Gamma_6(w) \cap \{ 0 \in \mathcal{I} \}$, one has
\begin{align*}
	P(\Gamma_5^\complement)
	&\leq \sum_{y,z \in B_1(0,7K\|x\|_1)}\P(T(0,z-y)-T_t(0,z-y)<0)\\
	&\leq \sum_{y,z \in B_1(0,7K\|x\|_1)} \P(\Gamma_6(z-y)^\complement).
\end{align*}
From Corollary~\ref{cor:jump}, this is bounded from above by a multiple of
$\|x\|_1^{4d}e^{-\Cr{jump2}t^\Ar{jump3}}$.
Therefore, we get the desired bound for $P(T_t(0^*,x^*) \not=T^*(0,x))$.

We next estimate $E[(T_t(0^*,x^*)-T^*(0,x))^2]^{1/2}$.
Schwarz's inequality implies
\begin{align*}
	&E\bigl[ (T_t(0^*,x^*)-T^*(0,x))^2 \bigr]\\
	&= E\bigl[ (T_t(0^*,x^*)-T^*(0,x))^2 \1{\{ T_t(0^*,x^*) \not=T^*(0,x) \}} \bigr]\\
	&\leq \bigl( E\bigl[ T_t(0^*,x^*)^4 \bigr]^{1/2}+E\bigl[ T^*(0,x))^4 \bigr]^{1/2} \bigr) P(T_t(0^*,x^*) \not=T^*(0,x))^{1/2}.
\end{align*}
By \eqref{eq:ttbd},
\begin{align*}
	E\bigl[ T_t(0^*,x^*)^4 \bigr]
	&\leq (4K)^4E\bigl[ (t \vee \|0^*-x^*\|_\infty)^4 \bigr]\\
	&\leq (12K)^4(2E[\|0^*\|_1^4]+1)\|x\|_1^4.
\end{align*}
On the other hand, letting $r(s):=s^{1/4}/(3\Cr{APtype1})$, one has
\begin{align*}
	E\bigl[ T^*(0,x))^4 \bigr]
	&\leq (3\Cr{APtype1}\|x\|_1)^4+\int_{(3\Cr{APtype1}\|x\|_1)^4}^\infty P\bigl( T^*(0,x)^4 \geq s \bigr)\,ds\\
	&\leq (3\Cr{APtype1}\|x\|_1)^4+2\int_{(3\Cr{APtype1}\|x\|_1)^4}^\infty P(\|0^*\|_1 \geq r(s))\,ds\\
	&\quad +	\int_{(3\Cr{APtype1}\|x\|_1)^4}^\infty
		\sum_{\substack{y \in B_1(0,r(s))\\ z \in B_1(x,r(s))}} P\bigl( T(y,z) \geq s^{1/4},\,y \in \mathcal{I} \bigr)\,ds.
\end{align*}
Proposition~\ref{prop:APtype} yields that $E[T^*(0,x))^4]$ is not greater than a multiple of $\|x\|_1^4$.
Combining these bounds with that for $P(T_t(0^*,x^*) \not=T^*(0,x))$,
we can derive the desired bound for $E[(T_t(0^*,x^*)-T^*(0,x))^2]^{1/2}$,
and the proof is complete.
\end{proof}

Before starting the proof of Proposition~\ref{prop:ttcon}, let us prepare some notation and lemmata.
For a given $x \in \Z^d \setminus \{0\}$ and $t>0$,
tile $\Z^d$ with copies of $(-t/2,t/2]^d$ such that each box is centered at a point in $\Z^d$
and each site in $\Z^d$ is contained in precisely one box.
We denote these boxes by $\Lambda_q$, $q \in \N$, and consider the random variables
\begin{align*}
	U_q:=((\omega(z))_{z \in \Lambda_q},(S_\cdot(z,\ell))_{z \in \Lambda_q,\ell \in \N}),\qquad q \in \N.
\end{align*}
Note that $(U_q)_{q=1}^\infty$ are independent and identically distributed.
Due to \eqref{eq:ttbd}, $T_t(0,x)$ depends only on states in some finite boxes $\Lambda_1,\dots,\Lambda_Q$,
and $T_t(0,x)$ can be regarded as a function of $(U_q)_{q=1}^Q$:
\begin{align*}
	Z:=T_t(0,x)=T_t(0,x,U_1,\dots,U_Q).
\end{align*}
In addition, let $(U'_q)_{q=1}^Q$ be independent copies of $(U_q)_{q=1}^Q$ and define
\begin{align*}
	Z'_q:=T_t(0,x,U_1,\dots,U_{q-1},U'_q,U_{q+1},\dots,U_Q),\qquad 1 \leq q \leq Q.
\end{align*}
Our main tools for the proof of Proposition~\ref{prop:ttcon} are Chebyshev's inequality and the following exponential versions of
the Efron--Stein inequality, refer the reader to \cite[Theorem~6.16]{BouLugMas13_book} and \cite[Lemma~3.2]{GarMar10}:
For any $\lambda, \theta>0$ with $\lambda\theta<1$,
\begin{align}\label{eq:BLMv}
	\log E[\exp\{ -\lambda(Z-E[Z]) \}]
	\leq \frac{\lambda\theta}{1-\lambda\theta} \log E\biggl[ \exp\biggl\{ \frac{\lambda V_-}{\theta} \biggr\} \biggr],
\end{align}
where
\begin{align*}
	V_-:=\sum_{q=1}^Q E\big[ (Z-Z'_q)_-^2 \big| U_1,\dots,U_Q \bigr].
\end{align*}
Furthermore, if there exist $\delta>0$, functions $(\phi_q)_{q=1}^Q$, $(\psi_q)_{q=1}^Q$ and $(g_q)_{q=1}^Q$
such that for all $1 \leq q \leq Q$,
\begin{align*}
	(Z-Z'_q)_- \leq \psi_q(U'_q),\quad
	(Z-Z'_q)_-^2 \leq \phi_q(U'_q) g_q(U_1,\dots,U_Q)
\end{align*}
and $E[e^{\delta \psi_q(U_q)} \phi_q(U_q)]<\infty$, then
for any $\lambda, \theta>0$ with $\lambda<\delta \wedge \theta^{-1}$,
\begin{align}\label{eq:BLMw}
	\log E[\exp\{ \lambda(Z-E[Z]) \}]
	\leq \frac{\lambda\theta}{1-\lambda\theta} \log E\biggl[ \exp\biggl\{ \frac{\lambda W}{\theta} \biggr\} \biggr],
\end{align}
where
\begin{align*}
	W:=\sum_{q=1}^QE\big[ e^{\delta \psi_q(U_q)} \phi_q(U_q) \bigr]g_q(U_1,\dots,U_Q).
\end{align*}

We use the following lemmata to estimate the right sides of \eqref{eq:BLMv} and \eqref{eq:BLMw}.

\begin{lem}\label{lem:perturb}
Write $\pi_t(0,x)=(0=x_0,x_1,\dots,x_m=x)$ for the finite sequence of $\Z^d$ that has
$T_t(0,x)=\sum_{i=0}^{m-1}\sigma_t(x_i,x_{i+1})$, chosen with a deterministic rule to break ties.
Moreover, let $R_q$ be the event that $\pi_t(0,x)$ intersects $\Lambda_q$.
Then, we have for $1 \leq q \leq Q$,
\begin{align}\label{eq:perturb}
	(Z-Z'_q)_- \leq 8Kt\1{R_q}.
\end{align}
\end{lem}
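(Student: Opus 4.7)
The plan is to split on the event $R_q$. On $R_q^\complement$ none of the vertices $x_0,\dots,x_m$ lies in $\Lambda_q$, so each cost $\sigma_t(x_i,x_{i+1})$ along $\pi_t(0,x)$ is measurable with respect to $(U_{q'})_{q'\ne q}$ alone, since $\sigma_t(y,z)$ depends on $U_q$ only through $\tau(y,z)$ and only when $y\in\Lambda_q$. The same sequence is therefore admissible for $Z'_q$ with identical cost, giving $Z'_q\le Z$ and $(Z-Z'_q)_-=0$, which matches \eqref{eq:perturb} in this case.

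On $R_q$ I would construct a competitor for $T_t(0,x)$ in the perturbed environment by short-cutting the excursion of $\pi_t(0,x)$ through $\Lambda_q$. Set $i_*:=\min\{i:x_i\in\Lambda_q\}$ and $j_*:=\max\{i:x_i\in\Lambda_q\}$; the candidate sequence is $(x_0,\dots,x_{i_*},x_{j_*+1},\dots,x_m)$ when $j_*<m$, and $(x_0,\dots,x_{i_*},x_m)$ when $j_*=m$. Every edge of this candidate except the single shortcut edge emanating from $x_{i_*}$ starts at a vertex outside $\Lambda_q$, so its $\sigma_t$-cost is unaffected by the swap of $U_q$ for $U'_q$.

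The key estimate is that the shortcut edge is cheap in the new environment. The deterministic bound $\sigma_t(y,z)\le 4K(t\vee\|y-z\|_\infty)$ is immediate from the three cases defining $\sigma_t$. Combined with the triangle inequality $\|x_{i_*}-x_{j_*+1}\|_\infty\le \|x_{i_*}-x_{j_*}\|_\infty+\|x_{j_*}-x_{j_*+1}\|_\infty\le t+\|x_{j_*}-x_{j_*+1}\|_\infty$, this yields either $\sigma_t(x_{i_*},x_{j_*+1})\le 8Kt$ (when $\|x_{j_*}-x_{j_*+1}\|_\infty\le t$) or $\sigma_t(x_{i_*},x_{j_*+1})\le 4Kt+\sigma_t(x_{j_*},x_{j_*+1})$ (otherwise); in the second case $\sigma_t(x_{j_*},x_{j_*+1})$ is already contained in the deleted middle sum $\sum_{k=i_*}^{j_*}\sigma_t(x_k,x_{k+1})$ of the original path, so the net cost increase is at most $4Kt$. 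The boundary case $j_*=m$ is easier still, since both $x_{i_*}$ and $x_m$ lie in $\Lambda_q$ and hence $\|x_{i_*}-x_m\|_\infty\le t$, making the shortcut cost $\le 4Kt$. Summing the unchanged contributions with the shortcut bound yields $Z'_q\le Z+8Kt$ on $R_q$, i.e.\ $(Z-Z'_q)_-\le 8Kt\1{R_q}$.

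The main obstacle I anticipate is the bookkeeping for the \emph{long jump} subcase, in which a single edge of $\pi_t(0,x)$ leaving $\Lambda_q$ is long enough that the naive bound $4K\|x_{i_*}-x_{j_*+1}\|_\infty$ alone does not sit below $8Kt$; here the only way to proceed is to absorb $\sigma_t(x_{j_*},x_{j_*+1})=4K\|x_{j_*}-x_{j_*+1}\|_\infty$ into the deleted middle portion of $Z$, leaving only an additive $4Kt$ slack. Once that observation is in place, the rest of the argument is routine.
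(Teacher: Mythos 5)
Your proposal is correct and follows essentially the same route as the paper: delete the excursion of the optimal $\sigma_t$-sequence through $\Lambda_q$, note that all remaining edge costs start outside $\Lambda_q$ and are therefore unaffected by replacing $U_q$ with $U'_q$, and bound the single shortcut edge by $8Kt$ via the triangle inequality, absorbing $\sigma_t(x_{j_*},x_{j_*+1})=4K\|x_{j_*}-x_{j_*+1}\|_\infty$ into the deleted middle portion in the long-jump subcase. Your two-way case split on $\|x_{j_*}-x_{j_*+1}\|_\infty$ versus $t$ is just a minor reorganization of the paper's three subcases and yields the same constant.
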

\begin{proof}
Since $(Z-Z'_q)_-=(Z'_q-Z)\1{\{ Z \leq Z'_q \} \cap R_q}$, we focus on the event $\{ Z \leq Z'_q \} \cap R_q$ from now on.
Let us first treat the case where $x \in \Lambda_q$.
Denote $i_0:=\min\{ 0 \leq i \leq m:x_i \in \Lambda_q \}$ and set $a:=x_{i_0}$.
Then, since $\|a-x\|_\infty \leq t$,
\begin{align*}
	Z'_q-Z
	\leq T_t(a,x,U_1,\dots,U_{q-1},U'_q,U_{q+1},\dots,U_Q)
	\leq 4Kt.
\end{align*}

For the case where $x \not\in \Lambda_q$, let us introduce the indices $i_1$ and $i_2$ as follows:
\begin{align*}
	&i_1:=\min\{ 0 \leq i \leq m-1:x_i \in \Lambda_q \},\\
	&i_2:=\max\{ 0 \leq i \leq m-1:x_i \in \Lambda_q \}.
\end{align*}
In addition, write $a:=x_{i_1}$, $b:=x_{i_2}$ and $c:=x_{i_2+1}$.
If $\|a-c\|_\infty \leq t$, then
\begin{align*}
	Z'_q-Z \leq T_t(a,c,U_1,\dots,U_{q-1},U'_q,U_{q+1},\dots,U_Q) \leq 4Kt.
\end{align*}
If $\|a-c\|_\infty>t$ and $\|b-c\|_\infty \leq t$, then
\begin{align*}
	Z'_q-Z
	&\leq T_t(a,c,U_1,\dots,U_{q-1},U'_q,U_{q+1},\dots,U_Q)\\
	&\leq 4K\|a-c\|_\infty\\
	&\leq 4K(\|a-b\|_\infty+\|b-c\|_\infty) \leq 8Kt.
\end{align*}
Otherwise (i.e., $\|a-c\|_\infty>t$ and $\|b-c\|_\infty>t$),
\begin{align*}
	Z'_q-Z
	&\leq T_t(a,c,U_1,\dots,U_{q-1},U'_q,U_{q+1},\dots,U_Q)-\sigma_t(b,c)\\
	&\leq 4K(\|a-c\|_\infty-\|b-c\|_\infty)\\
	&\leq 4K\|a-b\|_\infty \leq 4Kt.
\end{align*}
With these observations, $Z'_q-Z \leq 8Kt$ is valid in the case where $x \not\in \Lambda_q$,
and \eqref{eq:perturb} follows.
\end{proof}

\begin{lem}\label{lem:hit}
There exists a constant $\Cl{hit1} \geq 1$ such that
\begin{align*}
	\sum_{q=1}^Q\1{R_q} \leq \Cr{hit1}K \biggl( 1 \vee \frac{\|x\|_\infty}{t} \biggr).
\end{align*}
\end{lem}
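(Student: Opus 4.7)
The argument is purely deterministic and geometric. Write $\pi_t(0,x)=(x_0,\dots,x_m)$ for the optimal path. From \eqref{eq:ttbd}, every step satisfies $\sigma_t(x_i,x_{i+1})\geq\|x_i-x_{i+1}\|_\infty$, so the total $\ell^\infty$-variation of the path is at most $T_t(0,x)$. Using the trivial one-step path in the definition of $T_t$ also yields $T_t(0,x)\leq\sigma_t(0,x)\leq 4K(t\vee\|x\|_\infty)$. The case $m=0$ is trivial, so I assume $m\geq 1$.

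The main idea is to chop $\pi_t$ into sub-paths of cumulative cost at least $t$. Let $i_0=0$, and recursively let $i_{k+1}$ be the smallest index exceeding $i_k$ for which $\sum_{j=i_k}^{i_{k+1}-1}\sigma_t(x_j,x_{j+1})\geq t$, stopping when no such index exists (setting the final $i_s=m$). If $s$ denotes the number of sub-paths $P_k=(x_{i_{k-1}},\dots,x_{i_k})$ thus obtained, then each one (except possibly the last) has cost at least $t$, whence
\[
s\leq\frac{T_t(0,x)}{t}+1\leq 4K\Bigl(1\vee\frac{\|x\|_\infty}{t}\Bigr)+1.
\]

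Within the $k$-th sub-path, the minimality of $i_k$ combined with $\sigma_t\geq\|\cdot\|_\infty$ implies that for $i_{k-1}\leq j\leq i_k-1$,
\[
\|x_j-x_{i_{k-1}}\|_\infty\leq\sum_{l=i_{k-1}}^{j-1}\sigma_t(x_l,x_{l+1})<t,
\]
so all sub-path points except possibly the terminal $x_{i_k}$ lie in the $\ell^\infty$-ball $B_\infty(x_{i_{k-1}},t)$. This ball is a cube of side $2t$ and meets at most $3^d$ of the size-$t$ boxes $\Lambda_q$; including the extra box that may contain $x_{i_k}$, the sub-path visits at most $3^d+1$ distinct boxes.

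Summing over the $s$ sub-paths gives
\[
\sum_q\1{R_q}\leq (3^d+1)\,s\leq \Cr{hit1}K\Bigl(1\vee\frac{\|x\|_\infty}{t}\Bigr),
\]
with $\Cr{hit1}$ absorbing the dimension-dependent factor. The argument is elementary; the only design choice is to calibrate the sub-path cost threshold at $t$, which simultaneously controls the number of sub-paths (through the total-cost bound) and the spatial extent of each (through the $\sigma_t\geq\|\cdot\|_\infty$ bound). I do not foresee any substantial obstacle.
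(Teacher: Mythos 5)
Your proof is correct and follows essentially the same route as the paper: both arguments rest on the a priori bound $T_t(0,x)\le 4K(t\vee\|x\|_\infty)$ from the one-step path together with $\sigma_t(\cdot,\cdot)\ge\|\cdot\|_\infty$, and both greedily cut the optimal path into $O\bigl(K(1\vee\|x\|_\infty/t)\bigr)$ pieces, each confined to a region meeting at most $O(3^d)$ of the boxes $\Lambda_q$. The only cosmetic difference is the stopping rule --- you cut when the accumulated cost reaches $t$, while the paper cuts when the path exits an enlarged box of side $3t$ --- which changes nothing of substance.
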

\begin{proof}
Let $\pi_t(0,x)=(0=x_0,x_1,\dots,x_m=x)$.
For each $z \in \Z^d$, write $w(z)$ for center of the box $\Lambda_q$ containing $z$.
Then, define $\rho_0:=0$ and for $j \geq 1$,
\begin{align*}
	\rho_{j+1}:=\min \bigl\{ \rho_j<i \leq m:x_i \not\in w(x_{\rho_j})+(-3t/2,3t/2]^d \bigr\},
\end{align*}
with the convention $\min\emptyset:=\infty$.
Denote $J:=\max\{ j \geq 1:\rho_j<\infty \}$ and we assume
\begin{align*}
	J>4K\biggl( 1 \vee \frac{\|x\|_\infty}{t} \biggr).
\end{align*}
By definition, we have $T_t(0,x) \geq Jt$ and hence
\begin{align*}
	T_t(0,x) \geq Jt>4K(t \vee \|x\|_\infty),
\end{align*}
which contradicts \eqref{eq:ttbd}.
Therefore,
\begin{align*}
	J \leq 4K\biggl( 1 \vee \frac{\|x\|_\infty}{t} \biggr),
\end{align*}
and the proof is complete since $\pi_t(0,x)$ intersects at most $3^d(J+1)$ $\Lambda_q$'s.
\end{proof}

We are now in a position to prove Proposition~\ref{prop:ttcon}.

\begin{proof}[\bf Proof of Proposition~\ref{prop:ttcon}]
Fix arbitrary $\gamma>0$, $x \in \Z^d \setminus \{0\}$ and $0 \leq t \leq \gamma\sqrt{\|x\|_1}$.
We use Chebyshev's inequality to obtain that for all $u,\lambda \geq 0$,
\begin{align}\label{eq:cheby}
\begin{split}
	&P(|T_t(0,x)-E[T_t(0,x)]| \geq u)\\
	&\leq e^{-\lambda u} E[\exp\{ \lambda|Z-E[Z]| \}]\\
	&\leq e^{-\lambda u}(E[\exp\{ -\lambda(Z-E[Z]) \}]+E[\exp\{ \lambda(Z-E[Z]) \}]).
\end{split}
\end{align}
On the other hand, Lemmata~\ref{lem:perturb} and \ref{lem:hit} show that
\begin{align*}
	V_- \leq (8Kt)^2 \sum_{q=1}^Q\1{R_q} \leq \Cr{hit1}(8Kt)^2 \biggl( 1 \vee \frac{\|x\|_\infty}{t} \biggr).
\end{align*}
Moreover, taking $\delta:=1/t$, $\phi_q:=(8Kt)^2$, $\psi_q:=8Kt$ and $g_q:=\1{R_q}$ (see the notation above \eqref{eq:BLMw}),
we use Lemmata~\ref{lem:perturb} and \ref{lem:hit} again to obtain
\begin{align*}
	W \leq (8Kt)^2 e^{8K} \sum_{q=1}^Q\1{R_q}
	\leq \Cr{hit1} (8Kt)^2 e^{8K} \biggl( 1 \vee \frac{\|x\|_\infty}{t} \biggr).
\end{align*}
These bounds combined with \eqref{eq:BLMv}, \eqref{eq:BLMw} and \eqref{eq:cheby} prove that
for all $u \geq 0$ and for all $\lambda,\theta>0$ with $0<\lambda<t^{-1} \wedge (2\theta)^{-1}$,
\begin{align*}
	&P(|T_t(0,x)-E[T_t(0,x)]| \geq u)\\
	&\leq 2e^{-\lambda u}
		\exp\biggl\{ \frac{\lambda^2}{1-\lambda \theta} 	\Cr{hit1} (8K)^2 e^{8K} t(t \vee \|x\|_\infty) \biggr\}\\
	&\leq 2\exp\bigl\{ 2\Cr{hit1} (8K)^2 e^{8K} t(t \vee \|x\|_\infty)\lambda^2-u\lambda \bigr\}.
\end{align*}
Substitute $u=t\sqrt{\|x\|_1}$ for 
\begin{align*}
	&P\Bigl( |T_t(0,x)-E[T_t(0,x)]| \geq t\sqrt{\|x\|_1} \Bigr)\\
	&\leq 2\exp\Bigl\{ 2\Cr{hit1} (8K)^2 e^{8K} t(t \vee \|x\|_\infty)\lambda^2-t\sqrt{\|x\|_1}\lambda \Bigr\}.
\end{align*}
To minimize the right side, we choose
\begin{align*}
	\lambda=\frac{\sqrt{\|x\|_1}}{4\Cr{hit1} (8K)^2 e^{8K}(t \vee \|x\|_\infty)}.
\end{align*}
Since $t \leq \gamma\sqrt{\|x\|_1}$, $\Cr{hit1} \geq 1$ and $K \geq \gamma$,
\begin{align*}
	\lambda \leq \frac{\sqrt{\|x\|_1}}{2K^2\|x\|_\infty}
	\leq \frac{\sqrt{\|x\|_1}}{2K\|x\|_1}=\frac{1}{2K\sqrt{\|x\|_1}}<\frac{1}{t}.
\end{align*}
In addition, taking $\theta=(3\lambda)^{-1}$ leads to $0<\lambda<t^{-1} \wedge (2\theta)^{-1}$.
Therefore,
\begin{align*}
	P\Bigl( |T_t(0,x)-E[T_t(0,x)]| \geq t\sqrt{\|x\|_1} \Bigr)
	\leq 2\exp\biggl\{ -\frac{t}{8\Cr{hit1} (8K)^2 e^{8K}(1+\gamma)} \biggr\},
\end{align*}
which proves the proposition.
\end{proof}



\end{document}